\newcommand*{\eg}{\emph{e.g.}{}}
\newcommand*{\ie}{\emph{i.e.}{}}
\newcommand{\norm}[1]{\left\lVert#1\right\rVert}
\begin{document}

\RUNAUTHOR{}

\RUNTITLE{}

\TITLE{Generalized Hypercube Queueing Models with Overlapping Service Regions}

\ARTICLEAUTHORS{%
\AUTHOR{Wenqian Xing}
\AFF{Department of Management Science and Engineering, Stanford University, \EMAIL{wxing@stanford.edu}}
\AUTHOR{Shixiang Zhu}
\AFF{Heinz College of Information Systems and Public Policy, Carnegie Mellon University, \EMAIL{shixianz@andrew.cmu.edu}}
\AUTHOR{Yao Xie}
\AFF{H. Milton Stewart School of Industrial and Systems Engineering, Georgia Institute of Technology, \EMAIL{yao.xie@isye.gatech.edu}}
}

\ABSTRACT{%
We present a generalized hypercube queueing model, building upon the original model by Larson (1974), focusing on its application to overlapping service regions, such as police beats. To design a service region, we need to capture the workload and police car operation, a type of mobile server. 
The traditional hypercube queueing model excels in capturing the dynamics of systems with light traffic, as it primarily considers whether each server is busy or idle. 
However, contemporary service operations often experience saturation, in which each server in the system can only process a subset of calls, and a queue in front of each server is allowed.
Hence, the simple binary status for each server becomes inadequate, prompting the need for a more intricate state space analysis.
Our proposed model addresses this challenge by utilizing a Markov model with a large state space, represented by non-negative integer-valued vectors, along with a truncated hyperlattice queueing model approximation. By exploiting the sparsity of the transition matrix, we efficiently compute the steady-state distribution of the truncated hyperlattice model and show that it provides a close approximation under canonical dispatching policies.
This solution can then be used to evaluate general performance metrics for the service system.
We validate the effectiveness of our model through simulations of various artificial service systems.
We also apply our model to the Atlanta police operational system, which faces challenges such as an increased workload, significant staff shortages, and the impact of boundary effects on crime incidents.
Using real 911 calls-for-service data, our analysis indicates that a police operations system with permitted overlapping patrols can significantly mitigate these problems, leading to more effective deployment of the police force.
Although the paper focuses on police districting applications, the generalized hypercube queueing model is applicable to other mobile server models in the general setup.
}

\KEYWORDS{Hypercube queueing model; Overlapping service regions}

\maketitle

\section{Introduction}
\label{sec:introduction}

Service systems, such as Emergency Service Systems (ESS), are vital in providing emergency aid to communities, protecting public health, and ensuring safety. However, these systems often face resource limitations, including constrained budgets and personnel shortages \citep{peng2020probabilistic, yoon2021stochastic, zhu2022data}. As a result, the public and associated organizations need to evaluate and improve the efficiency of these systems to maintain their effectiveness.

Service systems are generally designed such that a specific emergency response unit is primarily responsible for a small geographical area while remaining available to aid neighboring regions if necessary, either through a structured dispatch hierarchy or on an ad-hoc basis. For instance, in policing service systems, personnel are typically assigned to a designated area known as a beat. Police departments often allocate patrol forces by dividing a city's geographical areas into multiple police zones. The design of these patrol zones has a significant impact on response times to 911 calls. Officers patrol their assigned beats and respond to emergency calls, with each spatial region having one queue.

A key component for analyzing service systems is to model the queueing dynamics of mobile servers, such as police cars, ambulances, and shared rides, depending on the context. Mobile servers refer to resources or servers that are not fixed in one location but are mobile, often to provide services in varying locations or under changing conditions. Some examples of mobile servers include ambulances, fire rescue trucks, on-demand delivery systems, and disaster response units.  In his seminal work, \cite{larson1974hypercube} introduced a general model for mobile servers, known as the {\it hypercube queueing models}. The model incorporated geographical components, or atoms, each with unique arrival rates, and mobile servers with distinct service rates. 
\textcolor{black}{This model can capture the light traffic regime of police operations, compute the steady-state distribution of binary states, and conveniently evaluate general performance metrics.}

The motivation for this study stems from various factors, including modeling police operations with heavy workload (often linked to staff shortages), severe staff shortages, and the observed boundary effect of crime incidents.
These observations are supported by data, as illustrated in Figures~\ref{fig:response-time}, \ref{fig:stats}, and \ref{fig:bordereffect}. Each factor will be discussed in greater detail below.

\begin{figure}[!t]
\FIGURE
    {\includegraphics[width=0.8\textwidth]{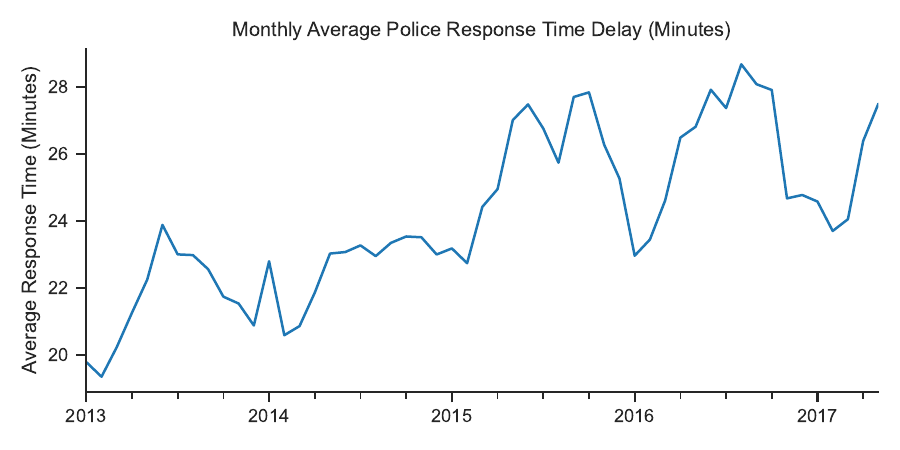}}
    {The increasing average police response time in Atlanta by month, from 2013 onwards. \label{fig:response-time}}
    {}
\end{figure}

\begin{figure}[!t]
\FIGURE
    {\includegraphics[width=0.9\textwidth]{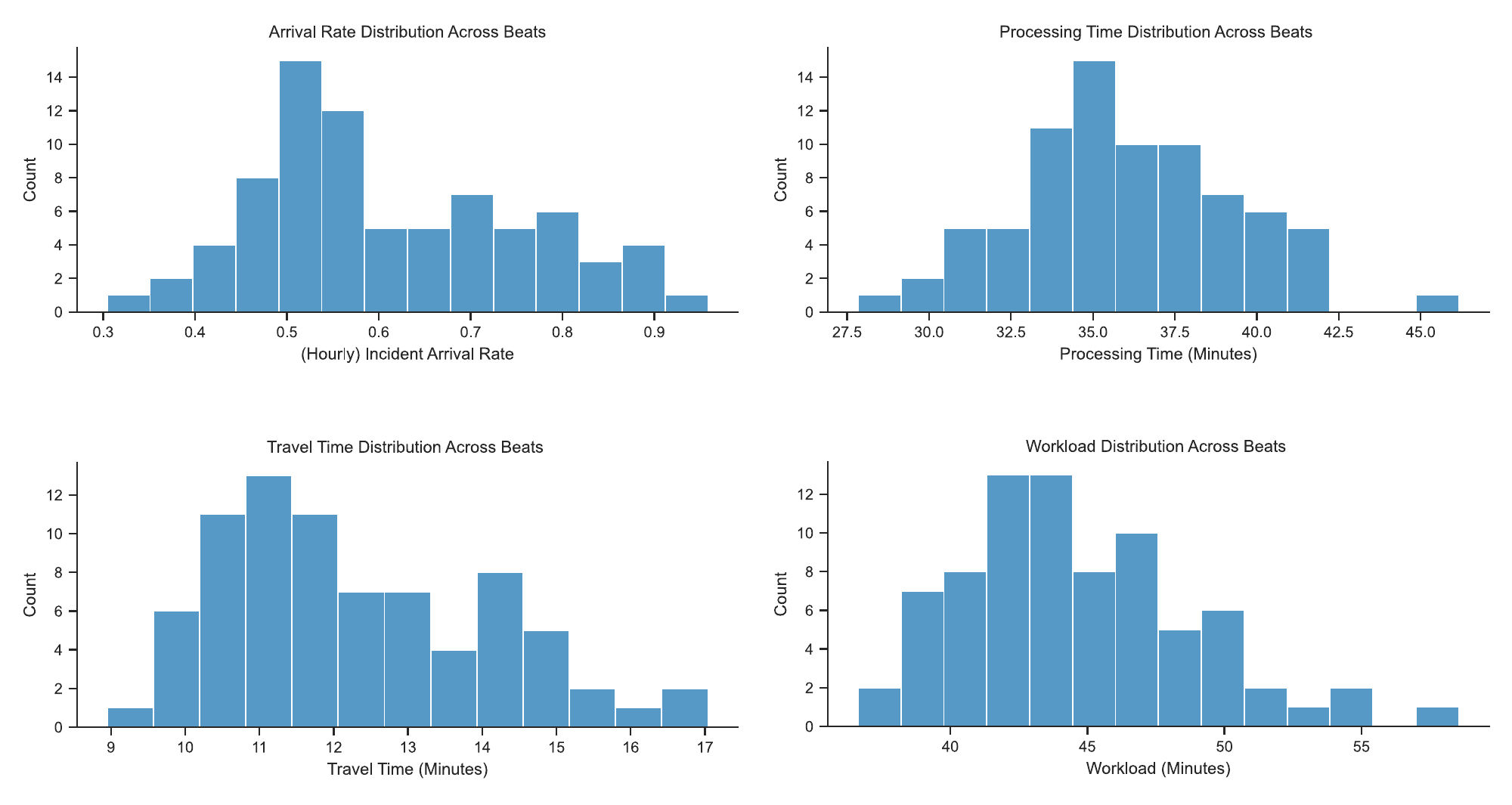}}
    {Operations statistics calculated using 911 data from the Atlanta Police Department, from 2013 to 2022, averaged for each of the 80 beats. \label{fig:stats}}
    {These figures show that calls are coming in faster than they can be handled by the police, as indicated by the workload in minutes and the hourly rate of the calls.
    }
\end{figure}

\begin{figure}[!t]
\FIGURE
    {\includegraphics[width=0.4\textwidth]{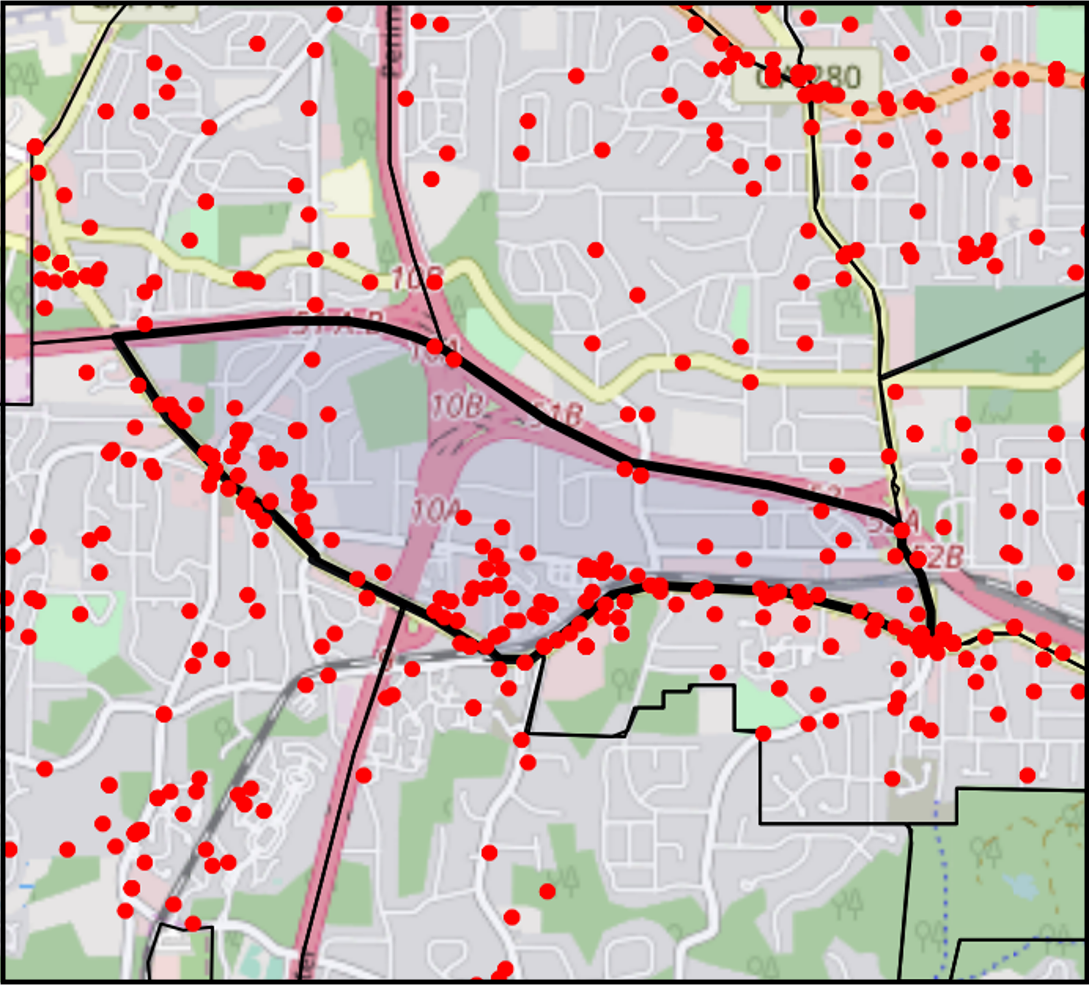}}
    {The border effect. \label{fig:bordereffect}}
    {Burglary incidents reported by the Atlanta police department in 2017 (represented by red dots) clustered near the border between beats (indicated by black lines).}
\end{figure}

First, modern police operations, often strained by high workload, necessitate a more flexible queueing model than what is provided by the traditional hypercube queueing model. 
The hypercube model, while elegant in its simplicity, is limited to representing each server's status as simply ``busy'' or ``idle'' and operates under the assumption of a singular queue for the entire system. 
However, in today's context, police operations systems frequently face overwhelming demands, leading to a scenario where all servers are constantly busy, which may not be accurately captured by the hypercube model.
Therefore, to effectively understand systems with heavier loads, where there could be multiple queues and each queue might exceed a length of one, 
our objective is to model the distribution of queue lengths for each server. This approach goes beyond the basic ``busy'' and ``idle'' states (represented by binary vectors) used in Larson's original hypercube queueing models, offering a more detailed and realistic representation of contemporary police operations.

Second, the traditional ``one-officer-per-beat'' operational mode has become increasingly difficult to implement due to severe staff shortages, which currently pose a significant challenge for many police departments, including those in the Atlanta metro area \citep{Metropol35:online}. For example, as of January 2023, suburban Atlanta's Gwinnett County (GA) reportedly ``employed 690 officers out of an authorized strength of 939, leaving more than a quarter of sworn officer positions vacant'' \citep{AtlantaA89:online}. This leads to busier police operations and challenges in assigning at least one officer to each beat each day. To address officer shortages in practice, some municipalities have implemented flexible service regions within their districts, enabling officers to respond to incidents more effectively. This empirical strategy has proven successful. Moreover, by utilizing restricted service regions with some overlap, departments can strike a balance between full flexibility and complete isolation, ensuring that officers can assist neighboring areas when needed without compromising the efficiency of their primary service areas.

Third, implementing flexible service regions with overlapping beats can help address the so-called ``border effect.'' For example, in Atlanta, we have observed a notable boundary effect in the distribution of police incidents, with crime rates tending to be higher at the peripheries of police beats (refer to Figure~\ref{fig:bordereffect} for a burglary case example). Additionally, response times at beat boundaries are often longer due to increased travel distances. Police officers also report that more criminal activity occurs near the borders of two beats, resulting in a higher volume of 911 calls in these areas. The boundary effect may contribute to increased crime rates near the edges of designated police zones, underscoring the need for a more efficient allocation of law enforcement resources. This observation suggests that the proposed overlapping beat structure may offer the demonstrated theoretical advantages and potential additional benefits in combating the border effect.

Motivated by these operational realities, this paper introduces a new queueing framework that generalizes the classical hypercube model to accommodate overlapping service regions, individual queue lengths, and flexible dispatch policies. While overlapping patrol strategies have gained traction in practice, there is currently no mathematical framework that systematically evaluates their performance. Our proposed model addresses this gap and offers the following core contributions:
\begin{enumerate}
    \item \emph{Hypercube queueing models for spatial service systems}: We introduce a state-space representation that captures per-server queue lengths and structured service overlaps in spatial service systems, allowing analysis of system dynamics under light and heavy traffic.
    \item \emph{Efficient steady-state computation for large-scale systems}: We introduce a hyperlattice structure that, despite its infinite state space, enables tractable and efficient computation through structured sparsity and state truncation techniques.
    \item \emph{Application to real-world patrol systems}: We validate the model using data from Atlanta Police Department operations, demonstrating how overlapping patrol regions can alleviate operational strain and reduce the adverse effects of boundaries. Our framework offers actionable insights into how resource allocation and dispatch policies influence system performance.
\end{enumerate}

\begin{figure}[!t]
\FIGURE
    {\includegraphics[width=0.7\textwidth]{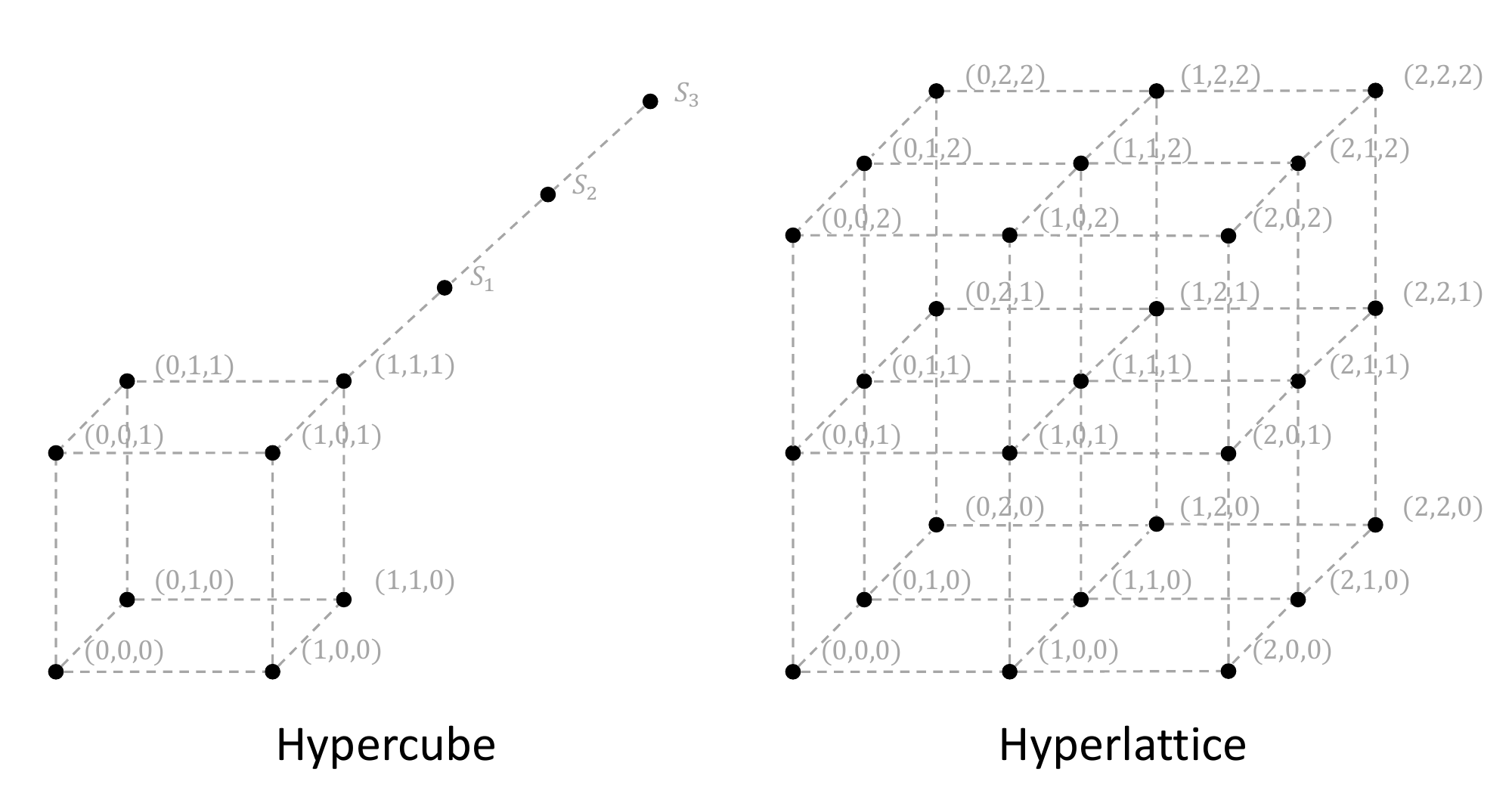}}
    {Illustration of the differences in state spaces. \label{fig:comparison}}
    {
    Our model represents the status of each server based on the number of servers waiting in its corresponding queue. In contrast, the hypercube model reduces this to a single tail state when all servers are busy. As a result, our model’s state space forms a ``hyperlattice'', expanding the original finite-state hypercube structure into an infinite space. While similar hyperlattice structures have been proposed in prior works, our approach utilizes them to represent a different state space.
    }
\end{figure}

To achieve our goal, we consider a Markov model with a potentially large state space represented by integer-valued vectors, 
\textcolor{black}{unlike the hypercube queueing model, which uses binary-valued state vectors as shown in Figure~\ref{fig:comparison}. 
In this model, each server has its own queue, and the integer in the state represents the number of calls waiting in that queue. 
Crucially, the policy that manages calls in each queue is determined by a hypergraph that reflects the overlapping nature of these regions.}
The design of the state space results in a sparse transition matrix that allows for transitions between states whose vectors differ by 1 in the $\ell_1$ distance (in contrast, the hypercube queueing model considers Hamming distance one transition for binary-valued state vectors). Based on this, we develop an efficient state truncation technique to relate the generalized hypercube queueing model to the known properties of a general birth-and-death process with state-dependent rates. This reduces the detailed balancing equations to a much smaller linear system of equations, enabling efficient computation for steady-state distributions, which are key in evaluating general performance metrics as recognized in Larson's original work \citep{larson1974hypercube}. 
\textcolor{black}{
We validate the proposed framework by testing it on both synthetic systems, with and without overlapping regions, and applying it to a real police system in collaboration with the Atlanta Police Department (APD). 
With the adoption of the ``umbrella car'' strategy by the APD, where multiple officers collaborate to cover more areas due to understaffing and rising crime rates, our study aims to analyze this operational method and help the police evaluate its effectiveness using our model. 
The results indicate that deploying more police units does not necessarily enhance policing effectiveness. Instead, overlapping police service regions provide a more flexible and dynamic response to varying call frequencies, significantly improving resource allocation efficiency compared to a rigid, non-overlapping structure.}

This paper is organized as follows: We begin with a review of relevant literature on spatial queueing systems and Markov models with similar structures. In Section~\ref{sec:model}, we introduce a novel queueing framework for evaluating districting plans with overlapping regions. Section~\ref{sec:setup} discusses the problem setup and key assumptions. Section~\ref{sec:hyperlattice} presents a generalized hypercube queueing model -- the hyperlattice model -- for capturing system dynamics, while Section~\ref{sec:estimation} introduces an efficient method for estimating its steady-state distribution. Section~\ref{sec:performance-measure} introduces two key performance measures using the hyperlattice model. Lastly, we demonstrate the effectiveness of our framework through experiments on synthetic systems in Section~\ref{sec:experiments} and conclude in Section~\ref{sec:discussions}, discussing potential extensions and generalizations.

\section{Related works}

Spatial queueing systems modeling with overlapping service regions has received limited attention in the literature. However, recent developments in service systems and their increasing complexity have sparked a growing interest in this field, particularly in the pursuit of more efficient dispatch policies. For instance, the concept of overlapping zones is discussed in \cite{bammi1975allocation}, which only considers \emph{completely} overlapping service regions. This approach may need to address higher crime rates near boundaries. In contrast, our study focuses on \emph{partially} overlapping service regions, which are more challenging to analyze but better suited for our purposes.

Richard Larson's paper \citep{larson1974hypercube} and his co-authored book with Asaf Odoni, ``Urban Operations Research'' \citep{larson1981urban}, introduced the hypercube queueing model, a pioneering work in this field. The hypercube queueing model provides a spatial queueing framework for analyzing and evaluating the performance of service systems. It permits servers to overlap into each other's territories and provide assistance when busy, making it particularly useful for emergency service systems, such as police and ambulance operations \citep{ansari2017maximum, de2015incorporating, takeda2007analysis, zhu2020data, zhu2022data, Morabito2008}. 
However, it assumes identical service from all servers sharing a single queue, which may be oversimplified for real-world service systems and is unable to capture more complex systems. In our context, servers may have limited authorized movement, resulting in separate queues for each server. This is because other servers cannot travel to specific regions to offer assistance. 
\textcolor{black}{
For instance, \cite{beojone2021efficient} proposed an extension of the hypercube model with fully dedicated servers. While their model allows both dedicated and basic servers to serve the same region under certain random dispatch strategies, it differs significantly from our hyperlattice queueing model in how server responsibilities are handled. In their model, dedicated servers are specifically assigned to handle the most severe calls from any region. In contrast, our hyperlattice queueing model assigns equal responsibilities to all servers, focusing on service regions that can be covered by multiple servers. A similar distinction is found in \cite{de2015incorporating}, where the model considers queue priorities and assigns different responsibilities to servers, such as Advanced Life Support (ALS) units and Basic Life Support (BLS) units. In their models, overlapping service occurs only across different server types. Moreover, \cite{takeda2007analysis} considered decentralized but dedicated service areas for ambulances, with each ambulance assigned to a specific station and region. In their hypercube model, overlapping service is achieved by enlarging the service regions with decentralized units. In contrast, our hyperlattice model allows units to serve neighboring overlapping regions without requiring decentralization.}

Among existing works extending the classic %
hypercube queueing model for specific applications, a notable contribution is \cite{beojone2021efficient}, which introduces a variation of the hypercube model. Their extension is designed to represent a dispatch policy in which advanced-equipped servers serve solely life-threatening calls (called dedicated servers).
Our model, however, differs as it allows servers to operate within the territories of other servers, presenting a distinct problem framework.
Additionally, they grouped servers with similar characteristics in their state space representation, aiming to reduce computational demands. In our method, each server is still represented individually in the state space, managing to do so without significant computational expense. We also assume that each server has its own queue, diverging from most hypercube model variations, which typically use a single queue for all servers. A detailed comparison between various hypercube models and our hyperlattice model is presented in Table~\ref{tab:model_comparison}.

\setlength{\tabcolsep}{10pt} %
\begin{table}[htbp]
\centering
\small
\caption{Comparison of Hyperlattice Model with Hypercube Models}
\label{tab:model_comparison}
\begin{tabular}{rlll}
\toprule
\textbf{Model Feature} & \textbf{\cite{larson1974hypercube}} & \textbf{\cite{beojone2021efficient}} & \textbf{Our Model} \\
\midrule
Service Region & Fully overlapping & Fully overlapping & Flexible overlapping \\[6pt]
Call Priority  & Uniform           & Tiered (Urgent/Basic) & Uniform \\[6pt]
Dispatch Policy & Priority-based   & Priority-based         & Flexible (state-dependent) \\[6pt]
Queueing Structure & Single loss queue & Single general queue & Multiple general queue \\[6pt]
Tractability   & High              & Moderate               & High (via truncation) \\
\bottomrule
\end{tabular}
\end{table}

Recently, \cite{tsitsiklis2017flexible} introduced a flexible queue architecture for a multi-server model, comprising $n$ flexible servers and $n$ queues connected through a bipartite graph, with each server having its own associated queue. The paper primarily focuses on the model's theoretical aspects and highlights the key benefits of flexibility and resource pooling in such queueing networks. They specifically concentrate on the scaling regime where the system size $n$ tends to be infinite while the overall traffic intensity remains constant. However, their model cannot be directly extended to our scenario, as it does not account for the geographical relationships among servers.

The queueing literature extensively examines load-balancing models involving multiple dispatchers and flexibility constraints, often under the framework of flexible queues or load-balancing systems. A representative example is the ``Join the Idle Queue'' policy and related dispatching strategies designed to optimize system performance under varying workload conditions (see, e.g., \cite{cruise2020stability, weng2020optimal}). These models typically focus on analyzing optimal or near-optimal dispatching policies under simplified assumptions. 
However, our contribution is primarily modeling-oriented: 
We develop a generalized hypercube queueing framework that captures the spatial and structural complexity of real-world service systems. Unlike much of the load-balancing literature, which often assumes homogeneous servers, centralized coordination, or simplified system dynamics, our model explicitly incorporates server-level queues, spatially dependent service areas, and flexible, state-dependent dispatch policies.
Instead of focusing on specific policies, we develop a general and extensible framework that approximates system performance across a broad class of state-dependent dispatch policies, while maintaining computational tractability.

\section{Proposed framework}
\label{sec:model}

This section develops a framework for assessing districting plans with overlapping patrol units for service systems.
To gain insight into the strategy of overlapping patrol, we consider a moving-server queueing system with multiple servers.
Typically, each server (\eg, a patrol service unit) is assigned to one particular service region, where the server has primary responsibility. 
When a server is not busy serving any active calls, it traverses its home region to perform preventive patrol.
A key feature of our model is its consideration of a more flexible state-dependent policy, allowing servers to collaborate and jointly serve a third overlapping service region within a spatial queueing framework, as illustrated in Figure~\ref{fig:system-exps}.
For the reader's convenience, Table \ref{tab:notation} contains summary definitions of notations frequently used in the paper.

\subsection{Problem setup and key assumptions}
\label{sec:setup}

In this section, we describe our problem setup and several key assumptions. 
Consider a service system patrolled by multiple servers, where each server is allocated to a specific area and has its own queue. 
Unlike conventional service systems with separate service territories, our approach permits servers to have overlapping service areas. 
The entire service system can be represented by an undirected \emph{hypergraph} \citep{bretto2013hypergraph} denoted by $(\mathcal{I}, \mathcal{E})$. 
In this hypergraph, $\mathcal{I}=\{i = 1, \dots, I\}$ denotes the set of vertices, each representing an individual server, with $I$ being the total number of servers in the system. 
The set $\mathcal{E}$ represents the hyperedges, where each hyperedge $e \in \mathcal{E}$ is a subset of $\mathcal{I}$, i.e., $\mathcal{E} \subseteq 2^\mathcal{I}$. Each hyperedge corresponds to a group of servers jointly overseeing an overlapping area. The cardinality of $\mathcal{E}$ is denoted by $E$.
Figure~\ref{fig:hypergraph-exps} presents the hypergraph of a three-server system as an example.

The service system encompasses a geographical space denoted by $\mathcal{S} \subset \mathbb{R}^2$. We denote the part of the space that server $i$ patrols as $\mathcal{S}_i \subseteq \mathcal{S}$. 
We require that the service territories of all servers cover the entire space of the system, \ie, $\bigcup_{i \in \mathcal{I}} \mathcal{S}_i=\mathcal{S}$. 
Regions jointly patrolled by a set of servers $e \in \mathcal{E}$, are termed as \emph{overlapping service regions}, and are denoted as $\mathcal{O}_{e}$.
Conversely, regions exclusively patrolled by server $i$ are termed \emph{primary service regions}, and are defined as $\mathcal{P}_i=\mathcal{S}_i \backslash \bigcup_{\{e: i \in e\}} \mathcal{O}_{e}$. 
Note that it is important to guarantee $ e \not\subseteq e' $ for every $e, e' \in \mathcal{E}$ and $ e \neq e' $.
This ensures all primary and overlapping service regions are mutually exclusive and can be distinctly identified either by $i \in \mathcal{I}$ or by $e \in \mathcal{E}$.
Both primary and overlapping service regions can be arbitrarily small to avoid quantization errors, and they can have any geometric shape.

\begin{figure}[!t]
\FIGURE
    {\includegraphics[width=0.8\textwidth]{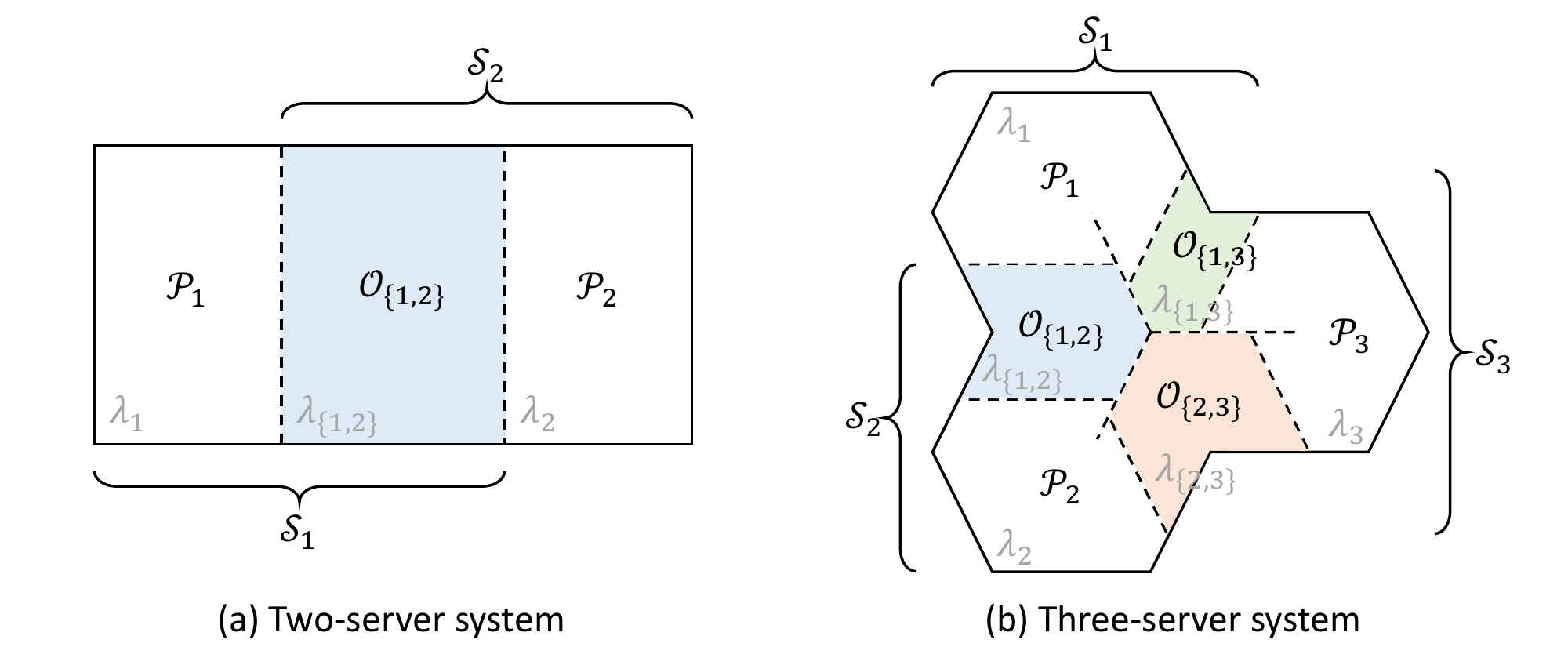}}
    {Examples of 2- and 3-server systems with overlapping patrols. \label{fig:system-exps}}
    {The white region $\mathcal{P}_i$ denotes the primary service area of server~$i$, which can only be served by that server. The shaded region $\mathcal{O}_{e}$ represents the overlapping service area $e = \{i,j\}$, which can be jointly served by servers~$i$ and~$j$.}
\end{figure}

\begin{figure}[!t]
\FIGURE
    {\includegraphics[width=0.6\textwidth]{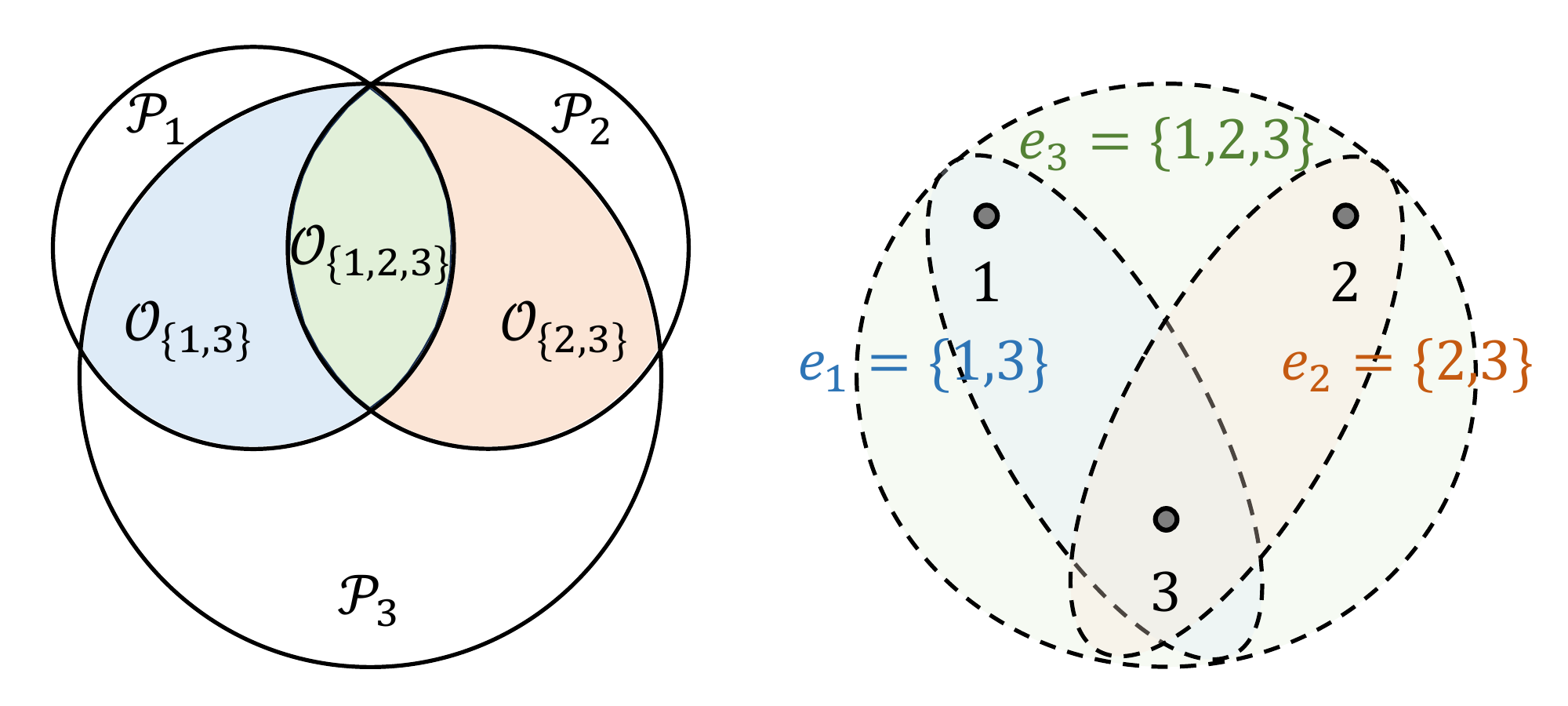}}
    {The hypergraph of a three-server system. \label{fig:hypergraph-exps}}
    {The black dots represent vertices and the shaded areas represent hyperedges.}
\end{figure}

Assume service calls arrive in the system according to a \textcolor{black}{Poisson process with constant arrival rate}. The arrivals of calls in primary service region $i$ and overlapping service region $e$ follow time-homogeneous Poisson processes with rate $\lambda_i$ and $\lambda_{e}$, respectively. Let $\lambda=\sum_{i \in \mathcal{I}} \lambda_i+\sum_{e \in \mathcal{E}} \lambda_{e}$ be the total call arrival rate in the entire region. 

The service process is described as follows. 
The server begins processing the call as soon as it arrives at the location where the call is reported. 
The service time of each server is independent of the customer's location and the system's history.
The service time of server $i$ for any call for service has a negative exponential distribution with mean $1/\mu_i$.
Let $\mu = \sum_{i \in \mathcal{I}} \mu_i$ denote the total service rate in the entire region. 
\textcolor{black}{
After completing the service, the servers return to or remain within their primary service territory, regardless of where the previous call originated.}
For ergodicity, it is necessary that $\lambda < \mu$ should hold (a detailed stability condition is provided in Section~\ref{sec:CTMC_rep}).

\subsection{Hyperlattice queueing model for overlapping patrol}
\label{sec:hyperlattice}

We propose a generalized hypercube queueing model, called the \emph{hyperlattice} model, to capture the operational dynamics of systems with overlapping service regions.
The system state depends on the status of all the servers in this system, and the number of calls in each queue to be processed.

\subsubsection{State representation}
These states can be represented by a hyperlattice in dimension $I$. Each node of the hyperlattice corresponds to a state $B = (n_i)_{i\in\mathcal{I}}$ represented by a tuple of numbers, where a non-negative integer $n_i \in \mathbb{Z}_+$ indicates the status of server $i$. 
Server $i$ is idle if $n_i = 0$ and busy if $n_i > 0$. The value of $n_i - 1$ represents the number of calls waiting in the queue of server $i$ when the server is busy. 
Figure~\ref{fig:state-space-2d} (a) gives an example of the state space of a hyperlattice queueing model for a system with two servers. 
It is worth noting that the state space in a hyperlattice queueing model can be divided into two parts, with one part consisting of states that possess at least one available server (referred to as \emph{non-saturated} states) and the other part comprising states in which all servers are busy (referred to as \emph{saturated} states), as shown in Figure~\ref{fig:two-types-states}.

\begin{figure}[!t]
\FIGURE
    {\includegraphics[width=\linewidth]{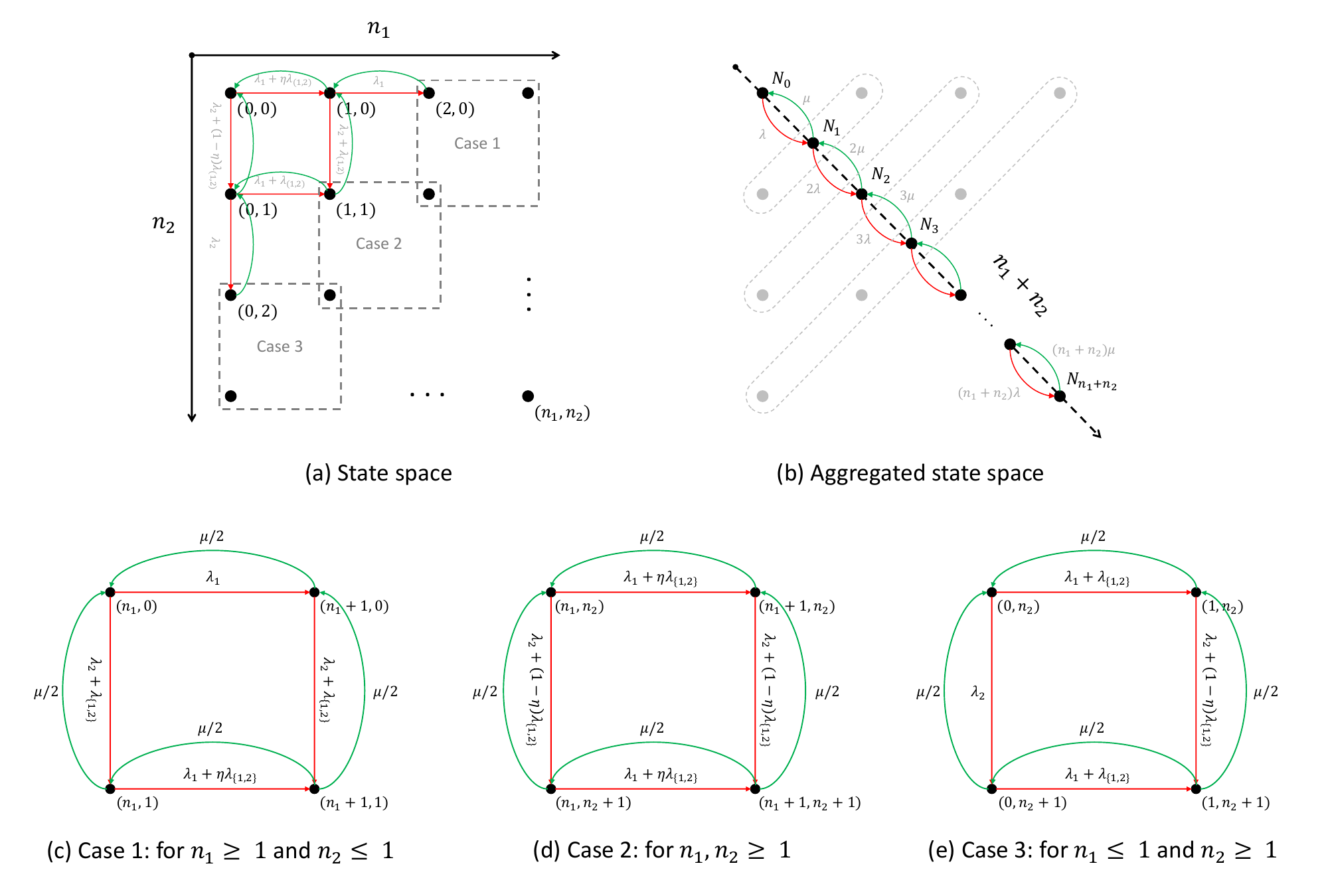}}
    {The state space in a hyperlattice queueing model for a system with two servers. \label{fig:state-space-2d}}
    {The state space is shown in (a) as an overview, with each state denoted by the pair $(n_1, n_2)$. Green arrows indicate downward transitions and red arrows indicate upward transitions. (b) presents the birth-death model by truncating states at an upper bound $K=2$. In (c), (d), and (e), the details of the state space are depicted in three distinct scenarios.}
\end{figure}

\begin{figure}[!t]
\FIGURE
    {\includegraphics[width=.8\linewidth]{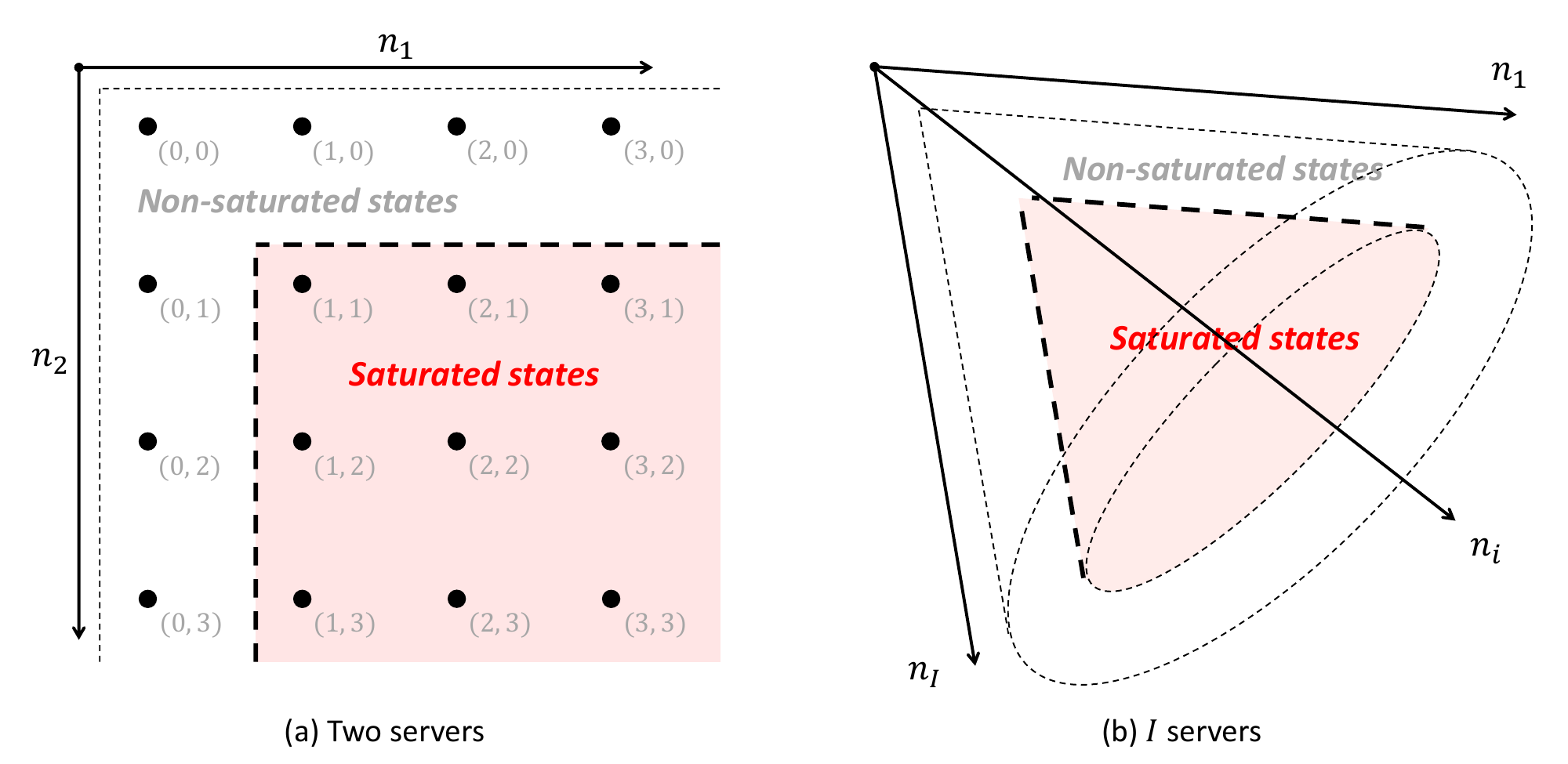}}
    {An illustration of two types of states in hyperlattice queueing models with different numbers of servers. \label{fig:two-types-states}}
    {The area colored red signifies the saturated states that have no idle servers, whereas the empty area represents the non-saturated states that have at least one server that is unoccupied.}
\end{figure}

\subsubsection{Touring algorithm}

To create the transition rate matrix, the states must be arranged in order. However, since the standard vector space lacks an inherent order, we employ a \emph{touring algorithm} to traverse the entire hyperlattice. This algorithm generates a sequence of $I$-digit non-negative integers $B_0, B_1, \dots$, which contains an infinite number of members and fully describes the order of the states.

\begin{figure}[!t]
\FIGURE
    {\includegraphics[width=\linewidth]{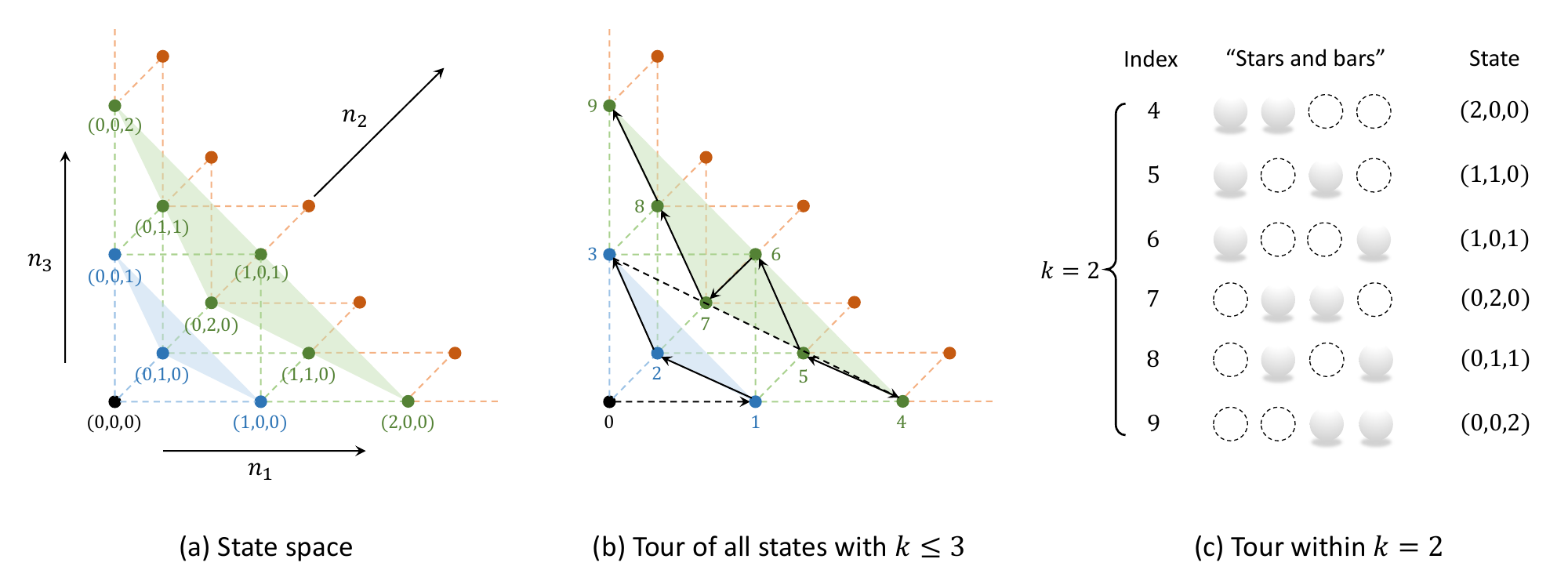}}
    {A hyperlattice queueing model for a system with three service regions. \label{fig:state-space-3d}}
    {(a) shows part of the state space of the hyperlattice for $k=1,2,3$; (b) presents the tour of all the states with $k \leq 3$, where solid arrows indicate transitions within states with the same number of units, while dashed arrows represent transitions between states with different numbers of units; (c) shows the sequence of all states within $k=2$, where the number of balls separated by dashed circles represents the status of server $i$ ($n_i$).}
\end{figure}

\begin{algorithm}[!t]
\SetAlgoLined
	{\bfseries Input:} 
	Number of servers $I$;
	{\bfseries Output:} 
        The sequence of ordered states $\mathcal{B}$;\\
	{\bfseries Initialization:} 
        $\mathcal{B} \leftarrow \emptyset$\;
	\For{$k = 0, 1, \dots$}{
            $\mathcal{B}_k \leftarrow \{(n_i) \in \mathbb{Z}_+^I \mid \sum_{i\in\mathcal{I}} n_i = k\}$, where {$|\mathcal{B}_k|=\binom{k+I-1}{I-1}$};\\
            Sort $\mathcal{B}_k$ using an arbitrary order\;
            $\mathcal{B} \leftarrow (\mathcal{B}, \mathcal{B}_k)$\;
	}
\caption{Tour algorithm for a hyperlattice}
\label{algo:tour}
\end{algorithm}

We simplify the tour problem by decomposing it into $K$ sub-problems. 
Each sub-problem $k \le K$ first identifies the states in which exactly $k$ calls are staying in the system (either waiting in the queue or being served), and then proceeds to solve the enumerative combinatorics problem by searching for $\binom{k+I-1}{I-1}$ possible combinations in a pre-determined sequence.
It is important to note that the sequence generated by each sub-problem is just one of numerous possible tours of the cutting plane in the hyperlattice.
Figure~\ref{fig:state-space-3d} presents a hyperlattice with $I=3$, along with its corresponding tour of all the states. 
The combinatorics can be translated into the ``stars and bars'' \citep{flajolet2009analytic}, \ie, finding all possible positions of $I-1$ dashed circles that separate $k$ balls into $I$ groups, as illustrated in Figure~\ref{fig:state-space-3d} (c). 
The number of balls being separated in group $i$ can be regarded as $n_i$.
Algorithm~\ref{algo:tour} summarizes the tour algorithm that generates the sequence $B_0, B_1, \dots$. 
For convenience, we index the states based on the order produced by the tour algorithm. The state index is represented by $u \in \mathcal{U} = \{0, 1, 2, \dots\}$.

\subsubsection{Dispatch policy}
\label{sec:dispatch_policy}
We define a class of dispatch policies within a general state-dependent framework.
Let $\mathcal{H}$ denote the set of all Markovian dispatch policies.
Our state-dependent dispatch policy, $\eta \in \mathcal{H}$, is defined as follows:
When a call arrives at location $s \in \mathcal{S}$, we first determine whether $s$ belongs to a primary service region or an overlapping service region. If $s \in \mathcal{P}_i$ for some server $i \in \mathcal{I}$, the call is routed directly to server $i$. 
If $s$ belongs to an overlapping region, i.e., $s \in \mathcal{O}_e$ for some server set $e \in \mathcal{E}$, then the call can be served by any server in $e$. In this case, the dispatch policy is characterized by a discrete distribution $\eta_{e,u} = (\eta_{e,u}(i))_{i \in e}$, where $\eta_{e,u}(i)$ denotes the probability of assigning the call to server $i \in e$. 
The corresponding dispatch policy is denoted as $\eta = \left\{\eta_{e, u} \mid e \in \mathcal{E}, u \in \mathcal{U}\right\}$. This broad class of state-dependent policies includes canonical examples such as ``Join-the-Shortest-Idle-Queue'' in \cite{weng2020optimal}.

\begin{figure}[!t]
\FIGURE
    {\includegraphics[width=\linewidth]{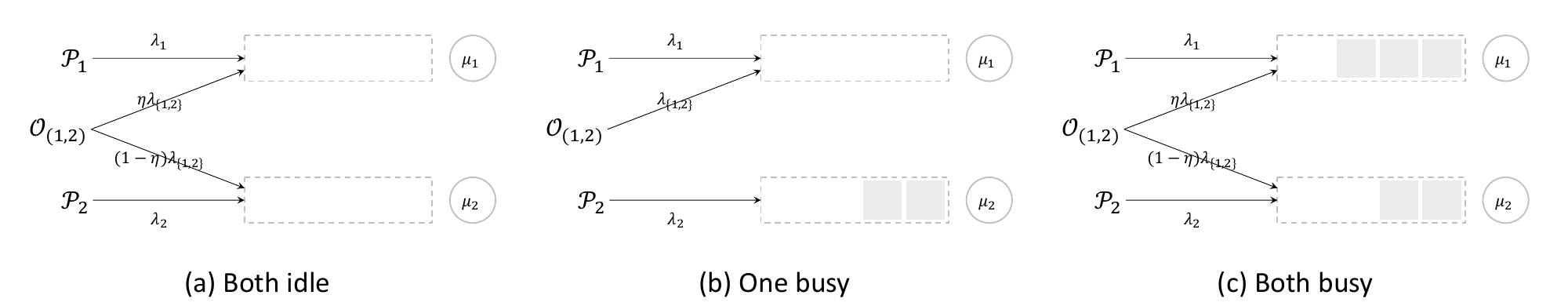}}
    {Queue networks with two servers and two queues. \label{fig:queue-networks}}
    {(a) both servers are idle, (b) one of the servers is busy, and (c) both servers are busy, where $p$ is the probability of assigning the call to server 1.}
\end{figure}

A simple example of such a dispatch policy for a two-server system with a single overlapping service region is illustrated in Figure~\ref{fig:queue-networks}, where the distribution $\eta_{e,u}$ is designed to prioritize the idle server. Specifically, in state $u$, if only one server is idle, the dispatch policy assigns the call exclusively to that server, i.e., $\eta_{e,u}(i) = 1$ for the idle server $i$. Otherwise, if both servers are either available or busy, the call is assigned randomly, with $\eta_{e,u}(1) = p$ and $\eta_{e,u}(2) = 1 - p$ for some fixed probability $p \in [0,1]$.

\subsubsection{Transition rate matrix}

Now we define the transition rate matrix for the hyperlattice queue $Q = (q_{uv})_{u,v\in\mathcal{U}}$, where $q_{uv}$ ($u \neq v$) denotes the transition rate departing from state $B_u$ to state $B_v$.
Let $u$ represent the index of state $(n_1, n_2, \dots, n_i, \dots, n_I)$ and let $d_{uv} = \norm{B_u - B_v}_1$ be the $\ell_1$ distance (or Manhattan distance) between two vertices $B_u$ and $B_v$ on the hyperlattice.
We define $u^{i+}$ as the index of state $(n_1, n_2, \dots, n_i + 1, \dots, n_I)$; note that thus we have $d_{uu^{i+}} = 1$ and $B_{u^{i+}} \succ B_u$ ($B_u$ is elementwise greater than $B_v$). 
Diagonal entries $q_{uu}$ are defined such that $q_{uu} = - \sum_{v:v \neq u} q_{uv}$ and therefore the rows of the matrix sum to zero.
There are two classes of transitions on the hyperlattice: upward transitions that change a server’s status from idle to busy or add a new call to its queue; and downward transitions that do the reverse. 
The downward transition rate from state $B_{u^{i+}}$ to its adjacent state $B_u$ is always $q_{u^{i+}u} = \mu_i$.
The upward transition rate, however, will depend on how regions overlap with each other and the dispatch rule when a call is received, which can be formally defined by the following proposition. 

\begin{proposition} [Upward transition rate]
\label{prop:upward-transition}
For an arbitrary state index $u$, the upward transition rate from $B_u$ to $B_{u^{i+}}$ is:
\begin{equation}
q_{u u^{i+}}= \lambda_i+\sum_{e \in \mathcal{E}} 
\eta_{e,u}(i) \lambda_{e},
\label{eq:upward-transition-rate}
\end{equation}
where 
$\eta_{e,u}(\cdot)$ is a probability mass function that gives the probability of the new call in the overlapping service region $\mathcal{O}_{e}$ being received by server $i \in e$ at state $B_u$. The probability mass function $\eta_{e,u}$ has the following properties:
\begin{align}
    \eta_{e, u} (i) & = 0, &~ \forall  i \notin e, e \in \mathcal{E}, u\in\mathcal{U},\label{prob property i}\\
    \sum_{i \in \mathcal{I}} \eta_{e,u}(i) & = 1, &~ \forall e \in \mathcal{E}, u\in\mathcal{U}.\label{prob property ii}
\end{align}
\end{proposition}

The following argument justifies the proposition. 
The first term of \eqref{eq:upward-transition-rate} suggests that, for an arbitrary server $i$, it must respond to a call received in its primary service region $i$. 
The second term of \eqref{eq:upward-transition-rate} means that the server $i$ needs to respond to a call received in the overlapping service region 
$e$ according to the dispatch policy $\eta_{e,u}$.
It is essential to note that the dispatch policy $\eta_{e,u}$ is state-dependent and can be defined based on the user's specific needs, providing a general framework that is suitable for elucidating both random and deterministic dispatch strategies.
For example, 
Figure~\ref{fig:queue-networks} represents a specific case of the dispatch policy, in which
the assignment of server $i$ to the call received in overlapping service region $e$ can be discussed in two scenarios:
(a) When all servers $i \in e$ are busy ($u: n_i > 0, \forall~n_i \in B_u$), this call will join the queue of server $i$ based on the probability $\eta_{e,u}(i)$.
(b) Conversely, when there is at least one server idle ($u: n_i = 0, n_i \in B_u$), the server $i$ will be assigned to this call based on the probability $\eta_{e,u}(i)$.

\subsubsection{ Markov chain representation}
\label{sec:CTMC_rep}
The hyperlattice model introduced above can be formulated as a continuous-time Markov chain (CTMC) on a discrete state space. Specifically, it is characterized by the state space $B$ and the transition rate matrix $Q$. The original state space is defined as $B := \mathbb{Z}_{+}^{|\mathcal{I}|}$. 
For convenience, we introduce a touring algorithm that establishes a one-to-one mapping from the high-dimensional state space $B$ to a one-dimensional index state space $\mathcal{U} := \mathbb{Z}_{+}$. Additionally, let $e_i$ denote a vector with all elements zero except for the $i$-th entry, which is set to 1. The transition rate matrix $Q = (q_{uv})_{u,v\in\mathcal{U}}$ is then given by:
\begin{equation}\label{eq:trans_matrix}
q_{u v}=
\begin{cases}
\lambda_i+\sum_{e \in \mathcal{E}} \eta_{e, u}(i) \lambda_e & \text{if } B_v=B_u+e_i, \quad \forall i \in I, \\ 
\mu_i & \text{if } B_v=B_u-e_i, \left(B_u\right)_i>0, \quad \forall i \in I, \\ 
-\lambda-\sum_{i \in I:\left(B_u\right)_i>0} \mu_i & \text{if } B_v=B_u, \\  
0 & \text{otherwise.}
\end{cases}
\end{equation}
For the hyperlattice queueing model to be stable, i.e., to attain a steady-state distribution, the corresponding CTMC must be ergodic. To ensure ergodicity, we impose the following stability condition for general policies.

\begin{assumption}[Stability Condition]
\label{assumption:stability} 
Let $\mathcal{N} := \mathcal{I} \cup \mathcal{E}$ be the collection of all server sets associated with primary and overlapping service regions, i.e., $\cup_{i\in\mathcal{I}, e\in\mathcal{E}}\left(\mathcal{P}_i \cup \mathcal{O}_e\right)$. Define $\tilde{\mathcal{E}}$ as the set of compatibility edges in the bipartite graph $(\mathcal{N} \cup \mathcal{I}, \tilde{\mathcal{E}})$, i.e.,
$
\tilde{\mathcal{E}} := \{(i,e) : i \in \mathcal{I}, \; e \in \mathcal{N}, \; i \subseteq e \}.
$ Then, consider the condition:
$$
\sum_{e \in \mathcal{A}} \lambda_e<\sum_{i \in \mathcal{I}:\, \exists\, e \in \mathcal{A},(i, e) \in \tilde{\mathcal{E}}} \mu_i, \quad \forall \,\mathcal{A} \subseteq \mathcal{N} .
$$
\end{assumption}

Assumption~\ref{assumption:stability} is analogous to Hall’s marriage theorem \citep{hall1987representatives}. When this condition holds, by max-flow/min-cut theorem \citep{ford1956maximal}, there exists flows $\Lambda_{i e} \in \mathbb{R}_{+}^{|\mathcal{I}| \times |\mathcal{N}|}$ with
$$
\sum_{i \in \mathcal{I}} \Lambda_{i e}=\lambda_e \quad(\forall e \in \mathcal{N}), \quad \sum_{e \in \mathcal{N}} \Lambda_{i e} \leq \mu_i-\delta_i \quad(\forall i \in \mathcal{I}),
$$
for some $\delta_i > 0$ and $\Lambda_{i e} = 0$ whenever $(i, e) \notin \tilde{\mathcal{E}}$.
Then, we define a state-independent policy $\eta$, under which each arrival from primary or overlapping region $e \in \mathcal{N}$ is assigned to server $i \in \mathcal{I}$ according to a fixed probability distribution:
\begin{equation}\label{eq:state-indep-stable}
    \eta_{e}(i) = \frac{\Lambda_{i e}}{\sum_{i^{\prime} \in \mathcal{I}} \Lambda_{i^{\prime} e}}, \quad \forall i \in \mathcal{I}, e\in \mathcal{N}.
\end{equation}

\begin{lemma}\label{lemma:stability}
    Under Assumption~\ref{assumption:stability}, there exists a state-independent policy $\eta$ as defined in \eqref{eq:state-indep-stable} that ensures stability of the queueing system.
\end{lemma}
The proof of Lemma~\ref{lemma:stability} follows directly from Assumption~\ref{assumption:stability} and the max-flow/min-cut theorem. Note that this stability condition applies generally to state-independent policies. More sophisticated state-dependent policies (e.g., Join-the-Idle-Queue) can also be designed to ensure stability whenever Hall’s condition is satisfied.

\subsubsection{Balance equations}

To obtain the steady-state probabilities of the system, we can solve the balance equations given the transition rate matrix $Q$. 
Let $\mathbb{P}\{B\}$ denote the probability that the system is occupying state $B$ under steady-state conditions. 
The balance equations can be written as:
\begin{equation}
    \left ( \sum_{v: {B_v \succ B_u \atop d_{uv}=1}} q_{uv} + \sum_{v: {B_u \succ B_v \atop d_{uv}=1}} q_{uv} \right ) \mathbb{P}\{B_u\} =  \sum_{v: {B_u \succ B_v \atop d_{vu}=1}} q_{vu}  \mathbb{P}\{B_v\} + \sum_{v: {B_v \succ B_u \atop d_{vu} = 1}} q_{vu} \mathbb{P}\{B_v\},\quad  u \in \mathcal{U}.
    \label{eq:balance-equations}
\end{equation}
We also require that the probabilities sum to one, namely, 
\[\sum_{u \in \mathcal{U}} \mathbb{P}\{B_u\} = 1.\]
However, one can observe that the above balance equations are hard to solve as 
the number of states on the hyperlattice is infinite and grows exponentially with the number of servers, even when the queue capacity is limited. 
Additionally, from Equation~\eqref{eq:trans_matrix}, the transition rates $q_{u,v}$ explicitly depend on the state-dependent dispatch policy $\eta_{e,u}$ for all $e \in \mathcal{E}$ and $u,v \in \mathcal{U}$. This state-dependent structure further complicates solving Equations~\eqref{eq:balance-equations}, rendering them computationally intractable.
To overcome this difficulty, we introduce an efficient approximation through a truncated hyperlattice model in the following section.

\subsection{A truncated hyperlattice model}
\label{sec:estimation}

We approximate the solution to Equations~\eqref{eq:balance-equations} by solving the balance equations for a truncated hyperlattice model, in which the total capacity across all servers is bounded by an upper bound $K$. 
Specifically, instead of modeling each server as an infinite-capacity queue, we model the entire system analogously to an aggregated Erlang loss queue, i.e., the arrival rate in any region is set to zero whenever the total number of units in the system reaches $K$. In a light-traffic regime, setting the upper bound $K$ sufficiently large allows the truncated hyperlattice model to approximate the stationary dynamics of the original infinite-capacity queues closely. Figure~\ref{fig:state-space-2d}(b) illustrates an example of a 2-server overlapping queueing system truncated at $K=2$, with the original system shown in Figure~\ref{fig:state-space-2d}(a).

\subsubsection{ Approximate stationary distribution under truncation}
\label{sec:steady-state-computation}
Rather than attempting to calculate the stationary distribution across an infinite number of states in the original state space, the problem can be simplified by focusing only on a finite subset of states.
These states are selectively considered if their $k$ values are less than a user-defined hyper-parameter $K$, or $\norm{B}_1 \le K$, and the sum of their steady-state probabilities remains to be one.
It is worth noting that the stationary probability of the ``tail'' of the hyperlattice (states with large $k$ value) is usually low and can be negligible since their chance of happening decreases with the length of the queue. Specifically, in a light-traffic regime (e.g., when the arrival rate is significantly smaller than the service rate), the steady-state probability in the tail of the hyperlattice decreases fast as $K$ increases. As a result, the truncated queueing system provides a close approximation of the original system when $K$ is sufficiently large, where the stationary tail probability is negligible in the original system. 
In contrast, in a high-traffic regime, the truncated hyperlattice model may diverge significantly from the original model with infinite capacity. However, for certain performance metrics, e.g., workload variance across servers, the truncated model can still serve as a reasonable approximation of the original model. The accuracy of this approximation in a high-traffic regime depends primarily on how sensitively the metric of interest responds to the system state. In the following experiments, we demonstrate that when focusing on server workload, the truncated model remains a good approximation of the simulation results.

We represent this set of truncated states by $\mathcal{U}_K = \{0, \dots, U_K\}$, where $U_K$ denotes the size of the set. 
In the subsequent discussion, Lemma~\ref{lemma:set-size} presents the equation for $U_K$ in relation to $K$ and $I$.
This demonstrates that the size of the set can expand exponentially relative to the number of servers $I$, and it can become quite substantial even for moderate values of $K$, particularly in systems with multiple servers.

\begin{lemma}
    The size of the set $\mathcal{U}_K$ is $U_K = \binom{K+I}{I}$.
    \label{lemma:set-size}
\end{lemma}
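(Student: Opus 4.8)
The plan is to identify $U_K$ with the number of lattice points $B = (n_i)_{i\in\mathcal{I}} \in \mathbb{Z}_*^I$ satisfying $\norm{B}_1 = \sum_{i\in\mathcal{I}} n_i \le K$. By construction of the tour algorithm (Algorithm~\ref{algo:tour}), $\mathcal{U}_K$ indexes exactly those states whose ``level'' $k = \norm{B}_1$ lies in $\{0,1,\dots,K\}$, so $U_K$ equals the cardinality of this set of lattice points and the task reduces to a purely enumerative one.

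The main step I would take is a slack-variable reduction. Define an auxiliary coordinate $n_{I+1} := K - \sum_{i\in\mathcal{I}} n_i$. Since each $n_i \ge 0$ and $\sum_{i\in\mathcal{I}} n_i \le K$, we get $n_{I+1} \in \mathbb{Z}_*$, and the assignment $(n_1,\dots,n_I) \mapsto (n_1,\dots,n_I,n_{I+1})$ is a bijection from $\{B\in\mathbb{Z}_*^I : \norm{B}_1 \le K\}$ onto $\{(m_1,\dots,m_{I+1})\in\mathbb{Z}_*^{I+1} : \sum_{j=1}^{I+1} m_j = K\}$, its inverse simply dropping the last coordinate. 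The latter set is counted by the same ``stars and bars'' argument already invoked in Section~\ref{sec:hyperlattice}: placing $I$ bars among $K$ balls splits them into $I+1$ ordered groups, yielding $\binom{K + (I+1) - 1}{(I+1) - 1} = \binom{K+I}{I}$ configurations. Hence $U_K = \binom{K+I}{I}$.

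As an independent check I would also derive the same count from the layer decomposition: $\mathcal{U}_K$ is the disjoint union of the sets $\mathcal{B}_k$ of Algorithm~\ref{algo:tour} over $k = 0,\dots,K$, each of size $\binom{k+I-1}{I-1}$, so $U_K = \sum_{k=0}^{K}\binom{k+I-1}{I-1}$, which collapses to $\binom{K+I}{I}$ by the hockey-stick identity (readily proved by induction on $K$ via Pascal's rule $\binom{m}{j}+\binom{m}{j+1}=\binom{m+1}{j+1}$). There is no genuinely hard step in either route; the only thing to watch is the off-by-one bookkeeping in the binomial indices — in particular whether the all-idle state $B=0$ and the empty layer $k=0$ are counted — which the slack-variable argument sidesteps cleanly because it never uses the per-layer counts.
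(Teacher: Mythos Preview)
Your proposal is correct. Your primary argument (the slack-variable bijection) differs from the paper's proof, which proceeds exactly along the lines of your ``independent check'': the paper writes $U_K = \sum_{k=0}^{K}\binom{k+I-1}{I-1}$ and then applies the Hockey Stick identity to collapse the sum to $\binom{K+I}{I}$. Your main route instead introduces $n_{I+1} = K - \sum_i n_i$ to biject $\{B\in\mathbb{Z}_*^I:\norm{B}_1\le K\}$ with compositions of $K$ into $I+1$ nonnegative parts, then invokes stars and bars once. The slack-variable argument is slightly more self-contained (it needs only stars and bars, not an additional binomial identity) and, as you note, is less prone to off-by-one slips; the paper's version has the advantage of making the layer-by-layer growth of the state space explicit, which ties in with the surrounding discussion of the aggregated states $N_k$.
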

\proof{Proof of Lemma~\ref{lemma:set-size}}
    The size of the set $\mathcal{U}_K$ is equal to the summation $U_K = \sum_{k=0}^K \binom{k+I-1}{I-1}$, which can be simplified using the Hockey Stick identity \cite{jones1996generalized}:
    \[
        \sum_{k=n}^{m}\binom{k}{n}=\binom{m+1}{n+1}.
    \]
    Applying the identity with $m=I-1$ and $n=K+I-1$, we get:
    \[
        \sum_{k=0}^K \binom{k+I-1}{I-1} = \sum_{k=I-1}^{K+I-1} \binom{k}{I-1} = \binom{K+I}{I}
    \]
    Therefore, the result of the sum is $\binom{K+I}{I}$.
\endproof

Denote the steady-state probabilities and transition rate matrix for the states in set $\mathcal{U}_K$ as $\pi_K = \{\mathbb{P}\{B_u\}\}_{u\in \mathcal{U}_K}$ and $Q_K = (q_{uv})_{u, v \in \mathcal{U}_K}$, respectively. 
We can express the balance equations in \eqref{eq:balance-equations} as a compact form:
\begin{equation}
    Q_K^\top \pi_K = 0,
    \label{eq:compact-balance-equations}
\end{equation}
where the right-hand side is a column vector of zeros. 
The sum of probabilities in set $\mathcal{U}_K$ is equal to 1, i.e.,
\begin{equation}
\sum_{u\in \mathcal{U}_K} \mathbb{P}\{B_u\} = 1.
\label{eq:sum-trunc-steady-state-dist}
\end{equation}

Therefore, we can compute $\pi_K$ by solving \eqref{eq:compact-balance-equations} and \eqref{eq:sum-trunc-steady-state-dist}. However, there is redundancy in these equations as the number of equations is $K+1$ and the number of unknown variables is $K$. To eliminate this redundancy, we replace one row of $Q_K$ with a row of ones, usually the last row, and denote the modified matrix as $Q'_K$. Similarly, we modify the ``solution'' vector, which was all zeros, to be a column vector with one in the last row and zeros elsewhere, and denote it as $e_K$. In practice, we solve the resulting equation:
\[
    Q_K'^\top \pi_K = e_K.
\]

\begin{figure}[!t]
\FIGURE
    {\includegraphics[width=\linewidth]{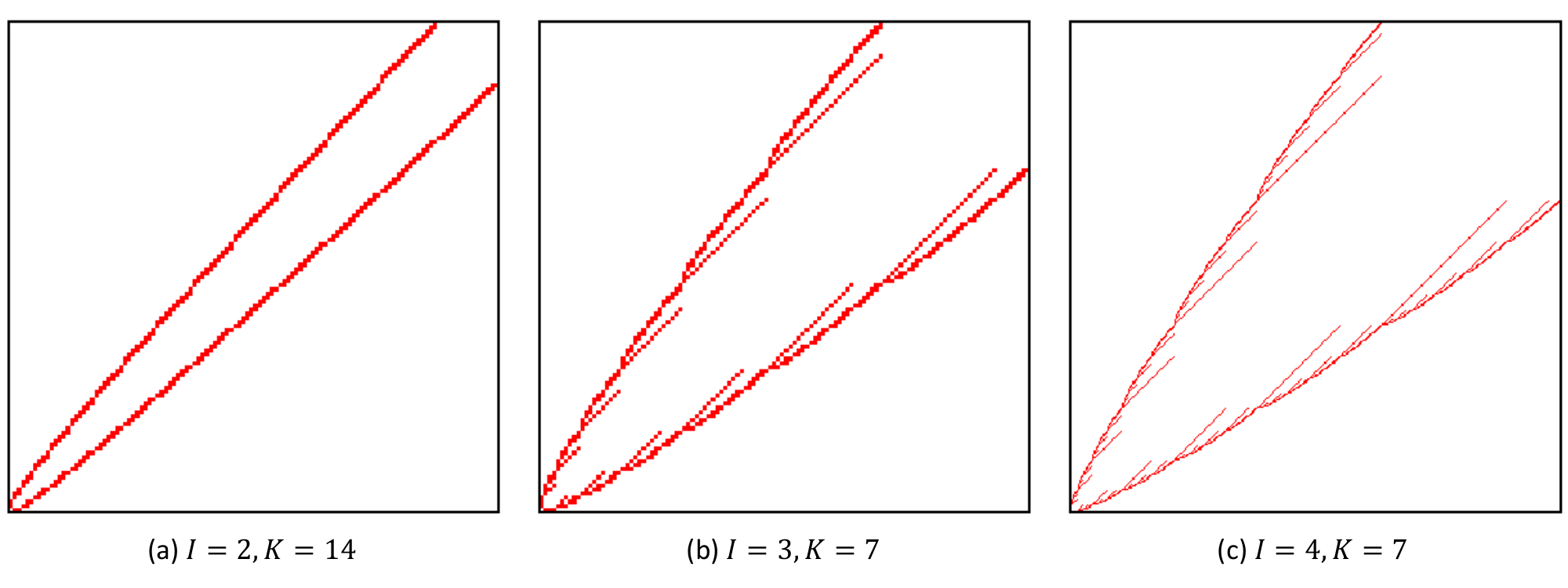}}
    {Support of the transition rate matrix $Q_K$ of hyperlattice queueing models (without diagonal entries). \label{fig:transition-rate-matrix}}
    {The shaded entries correspond to the matrices' non-zero entries, and the white spaces are zero. }
\end{figure}

We note that the solution to the above balance equations requires a matrix inversion of $Q_K$ (or $Q'_K$), which can become computationally challenging for large matrix sizes. 
By Lemma~\ref{lemma:set-size}, the size of $Q_K$ can reach a staggering $U_K \approx (K+I)^I / I!$ (using Stirling's approximation \citep{romik2000stirling}) even for moderate values of $K$ when the system has $I>2$ servers, presenting a considerable computational hurdle.
Fortunately, the sparsity of transition rate matrices (as illustrated in Figure~\ref{fig:transition-rate-matrix}) can be leveraged to iteratively find the steady-state distribution using the power method \citep{nesterov2015finding}, by constructing a probability transition matrix of a discrete-time Markov chain through uniformization of the original continuous-time Markov chain. 
Let $\mathbb{I}$ represent the identity matrix and denote the maximum absolute value on the diagonal of the transition rate matrix $Q_K$ as $\gamma=\max |q_{uu}|$. 
By initializing with a vector $\pi_K^{(0)}$, we can compute $\pi_K^{(t)} = \pi_K^{(t-1)}(\mathbb{I}+Q_K/\gamma)$ at each iteration $t=1,2,\dots$, until the distance between $\pi_K^{(t)}$ and $\pi_K^{(t-1)}$ falls below a specified threshold. 
The convergence of this approach is guaranteed, and the steady-state distribution can be efficiently computed despite the potentially large size of the transition rate matrix.

\subsubsection{Analysis of two canonical policies} 
Although the stationary distribution of a general state-dependent policy is often intractable to derive, two canonical policies admit explicit characterizations: the \emph{Static Random-Routing policy} and the \emph{Join-the-Idle-Queue policy}. We analyze these policies under the truncated hyperlattice model and show that the truncated approximation provides a valid method in the light-traffic regime.
\paragraph{Static Random-Routing policy.} 
Consider a state-independent static random-routing policy $\eta$. Specifically, recall that $\eta_e$ is a vector denoting the probability distribution over servers that determines how an arrival from region $e$ is routed. Then, the system is equivalent to $|\mathcal{I}|$ independent $M/M/1$ queues, where the arrival rate to server $i$ is 
$\lambda_i + \sum_{e \in \mathcal{E}} \eta_e(i)\lambda_e$ and the service rate is $\mu_i$. 
Assume the stability condition holds, i.e., 
$\lambda_i + \sum_{e \in \mathcal{E}} \eta_e(i)\lambda_e < \mu_i - \delta_i$ for each $i \in \mathcal{I}$ and some $\delta_i > 0$. Then, for server $i$, the stationary distribution of the number-in-system is the geometric distribution  
$
\pi_i(n) = (1-\rho_i) \rho_i^n
$, where $\rho_i = \frac{\lambda_i + \sum_{e \in \mathcal{E}} \eta_e(i)\lambda_e}{\mu_i}$ is the traffic intensity at server $i$. Then, let $N_i$ be a random variable with distribution $\pi_i$, and define $S := \sum_{i \in \mathcal{I}} N_i$ as the total number-in-system across all servers. The stationary probability of the truncated state space $\mathcal{U}_K$ is
$\mathbb{P}\!\left\{S \leq K \right\}$. The probability generating function of $N_i$ is
$$
G_i(z)=\mathbb{E}\left[z^{N_i}\right]=\frac{1-\rho_i}{1-\rho_i z}, \quad 0 \leq z<\frac{1}{\rho_i} .
$$
Since each server is independent, the probability generating function of $S$ is $\prod_{i\in\mathcal{I}} G_i(z)$. Then, for any $z \in\left(1,1 / \rho_{\max }\right)$ with $\rho_{\max }:=\max _i \rho_i$, Markov’s inequality on $z^S$ yields
$$
\mathbb{P}\{S \geq K\}=\mathbb{P}\left\{z^S \geq z^K\right\} \leq z^{-K} \prod_{i \in \mathcal{I}} \frac{1-\rho_i}{1-\rho_i z} \leq C(z) e^{-K \ln z},
$$
where $C(z) := \prod_{i \in \mathcal{I}} \frac{1-\rho_i}{1-\rho_i z}$ is a finite constant given $z$. Then, for any $z \in\left(1,1 / \rho_{\max }\right)$,
\begin{equation}\label{eq:bounded_tail}
    \sum_{u \in \mathcal{U}_K} \mathbb{P}\left\{B_u\right\} = \mathbb{P}\{S \leq K\}=1-\mathbb{P}\{S \geq K+1\} \geq 1-C(z) e^{-(K+1) \ln z} .
\end{equation}
Eq.~\ref{eq:bounded_tail} establishes that the stationary probability of the truncated state space converges to 1 exponentially fast in $K$.
As an illustrative example of a light-traffic two-server system with $\rho_1 = \rho_2 = 1/2$ and $z = 3/2$, we find that $C(z) = 4$, and
hence 
$
\sum_{u \in \mathcal{U}_K} \mathbb{P}\{B_u\} \geq 1 - 4 e^{-(K+1)\ln(3/2)} = 1 - 4\left(\tfrac{2}{3}\right)^{K+1}.
$
In particular, when taking $K=10$, the stationary probability of the truncated state space $\mathcal{U}_K$ exceeds $0.95$. This example demonstrates that the truncated hyperlattice model introduced in Section~\ref{sec:steady-state-computation} provides a valid and accurate approximation in the light-traffic regime.

\paragraph{Join-the-Idle-Queue policy.} 
Consider a Join-the-Idle-Queue policy in which each arrival, if possible, is routed uniformly at random to one of the idle servers.
When all servers are either busy or all idle, the policy reduces to a static random-routing policy $\eta$. The dynamics under this policy are more intricate, as they are state-dependent and must distinguish whether each queue is empty or not. However, this system can be coupled with the one operating under the pure static random-routing policy~$\eta$. In particular, let $S^\ast(t)$ denote the total number-in-system process (i.e., a birth-death process) under the Join-the-Idle-Queue policy, and let $S(t)$ denote the corresponding coupled process under the pure random-routing policy~$\eta$. One can show that $S^\ast(t) \leq S(t)$ almost surely for any time $t>0$, since both processes have identical arrival (birth) rates, but under the Join-the-Idle-Queue policy the total service (death) rate is never smaller than under static random routing, as it ensures that at least as many servers remain busy at all times. Therefore, $S^\ast$ is stochastically dominated by $S$, implying that the stationary probabilities satisfy $\mathbb{P}\{S^\ast < K\} \leq \mathbb{P}\{S < K\}$ (see, e.g., classical results on stochastic comparison of birth-death processes in \cite{van2012stochastic}). That is, the tail probability bound in Eq.~\eqref{eq:bounded_tail} also applies to $S^\ast$ under the Join-the-Idle-Queue policy, and the truncated hyperlattice model remains a valid approximation under the light-traffic regime.

\subsubsection{Generalization of the hypercube model}
The hyperlattice queueing model serves as a natural generalization of Larson's hypercube model, introduced initially in \cite{larson1974hypercube}. Specifically, when the entire region is treated as a single overlapping service region for all servers, i.e., $\mathcal{E} = \{\mathcal{I}\}$, the hyperlattice model reduces to the hypercube model under a given dispatch policy.
In the hypercube model, the dispatch policy $\eta$ follows a ``fixed preferences'' rule, where the dispatcher always selects the highest-ranked available unit based on a predefined preference list, such as the closest-distance dispatching policy. Additionally, the basic hypercube model in \cite{larson1974hypercube} is formulated with each server as an Erlang loss queue with zero capacity. This approach is analogous to our truncated hyperlattice model, except that we extend the loss framework to the entire system rather than individual servers.

Beyond these similarities, our hyperlattice queueing model extends the hypercube model in two fundamental ways. First, it differentiates between overlapping and primary service regions, whereas the hypercube model assumes a single, fully overlapping region. Second, it enables a more flexible, state-dependent dispatch policy that accounts for overlapping services. In the experiments in \ref{sec:compare_hypercube}, we demonstrate that the hyperlattice queueing model provides a more accurate approximation than the hypercube model under complex queueing dynamics with overlapping services.

\subsection{Measures of performance}
\label{sec:performance-measure}

In this section, we explicate two key performance measures for a service system with overlapping service regions, using the proposed hyperlattice queueing model as a means of analysis.

\subsubsection{Individual workloads}\label{sec:workload}
The workload of server $i$, denoted by $\rho_i$, is simply equal to the fraction of time that server $i$ is busy serving calls. Thus $\rho_i$ is equal to the sum of the steady-state probabilities on part of the hyperlattice corresponding to $n_i > 0$, \ie, 
\begin{equation}
    \rho_i = \sum_{u \in \mathcal{U}} \mathbbm{1}\{u: n_i > 0, n_i \in B_u\} \cdot \mathbb{P}\{B_u\},
    \label{eq:workload}
\end{equation}
where $\mathbbm{1}$ denotes indicator function and $n_i$ represents the status of server $i$ in state $B_u$. 
In practice, because the efficient model estimation derived in Section~\ref{sec:estimation} only computes the steady-state probabilities for the states in $\mathcal{U}_K$, the workload of server $i$ is numerically approximated by 
$$
\rho_i \approx \sum_{u \in \mathcal{U}_K} \mathbbm{1}\{u: n_i > 0, n_i \in B_u\} \cdot \mathbb{P}\{B_u\}. 
$$
The individual workloads can be further used to calculate various types of system-wide workload imbalance defined in \cite{larson1974hypercube}.

\subsubsection{Fraction of dispatches} 

For the remainder of the system performance characteristics, it is necessary to compute the \textit{fraction of dispatches that send a server to each sub-region under its responsibility}. We use $\rho_{i,e}$ to represent the fraction of dispatches from sending server $i$ to one of its primary overlapping service regions $\mathcal{O}_{e}$. We have
\begin{equation}\label{eq:fraction-dispatch}
\rho_{i,e} = \begin{cases}\sum_{u \in \mathcal{U}} \eta_{e,u}(i) \lambda_{e} \mathbb{P}\left\{B_u\right\} / \lambda, & \text { (a) } i \in e, 
\\ 0, & \text { (b) } i \notin e,\end{cases}
\end{equation}
In addition, we use $\rho_{i,i}$ to represent the fraction of dispatches that send the server $i$ to its primary service region $i$, which is consistently equal to $ \lambda_i /\lambda$ since this region can exclusively be handled by server $i$. We note that $ \rho_{i,e} = 0$ if $i \notin e$ and $\sum_{i, e} \rho_{i,e} + \sum_i \rho_{i,i} = 1$. Additionally, in practice, we consider $u \in \mathcal{U}_{K}$ rather than $\mathcal{U}$ in \eqref{eq:fraction-dispatch} as we only calculate the steady-state probabilities for the first $U_K$ states according to Section~\ref{sec:estimation}.

\subsubsection{Travel time estimation}

The proportion of dispatches can be employed to derive expressions for travel time metrics. These metrics can be obtained similarly according to \cite{larson1974hypercube}.
For any server $ i \in \mathcal{I} $ and one of its primary overlapping service regions with server set $ e \in \mathcal{E} $, let $ t_{i,e} $ denote the mean travel time for sending server $ i $ to the service region $\mathcal{O}_{e}$, and let $ t_{i,i} $ denote the mean travel time for sending server $ i $ to its primary service region. The unconditional mean travel time, denoted as $\bar\tau$, can be estimated by
\begin{equation}
{\bar\tau} = \sum_{i \in \mathcal{I}} \left( \sum_{e \in \mathcal{E}} \rho_{i,e} t_{i,e} + \rho_{i,i} t_{i,i} \right).
\end{equation}
Similarly, the mean travel time for each individual server $i\in \mathcal{I}$, denoted as $\tau_{i}$, can be estimated by
\begin{equation}
{\tau}_i=\frac{\sum_{e \in \mathcal{E}} \rho_{i,e} t_{i,e} + \rho_{i,i} t_{i,i}}{\sum_{e \in \mathcal{E}} \rho_{i,e} + \rho_{i,i}},
\end{equation}
and the mean travel time for each overlapping service region $e\in \mathcal{E}$, denoted as $\tau_{e}$, is given by
\begin{equation}
{\tau}_{e}=\frac{\sum_{i \in \mathcal{I}} \rho_{i,e} t_{i,e}}{\sum_{i \in \mathcal{I}} \rho_{i,e}}.
\end{equation}
However, it is important to note that this approach assumes that queueing delay is the predominant factor influencing the server's response time for each incident. This assumption has also been discussed in the hypercube queueing model by \cite{larson1974hypercube}. Moreover, in this metric, travel times are not explicitly embedded within the model formulation. Instead, following Larson's hypercube approach, travel times are treated as exogenous factors and assessed through post-hoc analyses.

\section{Experiments}
\label{sec:experiments}
In this section, we evaluate the performance of the hyperlattice model using the proposed metrics in Section~\ref{sec:performance-measure}. Our assessment includes a simulated environment in Section~\ref{sec:synthetic} and a real-world case study on police redistricting in Atlanta, Georgia, conducted in collaboration with the Atlanta Police Department in Section~\ref{sec:case_study}.

\subsection{Synthetic results}\label{sec:synthetic}

We first evaluate the effectiveness of the proposed hyperlattice queueing model through synthetic experiments. These experiments employ diverse parameter settings within synthetic service systems to investigate the accuracy of the performance metrics estimated by our models.
To embark on this, we first use the hyperlattice queueing model to estimate the steady-state probabilities of the systems by solving the balance equations \eqref{eq:balance-equations}. Based on the solutions, we then calculate summary statistics, including individual workloads and the fraction of dispatches, as shown in \eqref{eq:workload} and \eqref{eq:fraction-dispatch}. To validate the accuracy of our model estimations, we also developed a simulation program using \texttt{simpy} \citep{simpy}, a process-based discrete-event simulation framework based on Python. The primary purpose of this simulation program is to provide a ground truth for the queueing dynamics of the system, allowing us to compare the estimated performance measures from our model with the simulation results.

The simulation setup involves a $100 \times 100$ grid, where the width and height are both set to $100$ units. We explore different levels of overlapping service areas by varying the overlap width from $0$ to $99$ in $100$ increments. For each overlap width, we calculate the subregion radius and set the positions of the two servers accordingly. The servers are positioned symmetrically along the $x$-axis at coordinates determined by the radius of the subregion, and each server is responsible for a specific subregion polygon. 
When an event occurs, its location is randomly determined within the region. The simulation then identifies the police units responsible for the event based on their assigned regions. If multiple police units are available, a dispatch strategy is employed to select which unit will respond. The selected police unit travels to the event location, and the travel time is computed based on the Euclidean distance between the unit's current position and the event location.
Upon arrival, the police unit handles the event, which involves a service time drawn from an exponential distribution based on the unit's service rate. 
The police units have a service rate of $1$ and move at a constant speed of 10 units, ensuring that queuing delay remains the predominant factor influencing the server's response time for each incident.
The event arrival rate is set to $1$, with each simulation run lasting $1000$ events.
Using the simulation results, we can approximate the true individual workload of each server $\rho_{i}$ by calculating the percentage of time that the server is busy, and the true fraction of dispatches $\rho_{i,e}$ (or $\rho_{i,i}$) by calculating the percentage of calls received in region $e$ (or $i$) that have been assigned to server $i$.

{\color{black} The experiments were conducted on a personal laptop running macOS, equipped with an M3 Pro chip featuring a $12$-core CPU with a clock speed of up to $4.06$ GHz and $18$ GB of LPDDR5 RAM with a memory speed of up to $153.6$ GB/s.}

\subsubsection{Overlapping patrolling}

We also test our models on the synthetic systems that allow overlapping patrolling. Adapting the truncated hyperlattice approximation from Section~\ref{sec:estimation} with an upper bound state of $K=10$, we create a two-server system where the service region is a 100 by 100 square with a single overlapping service region in the middle ($I=2$ and $E = 1$), as depicted in Figure~\ref{fig:system-exps} (a). We assume that calls are uniformly distributed with an arrival rate of $\lambda=1$ and that the service rate of each server is fixed at $\mu_i=1$. 
Changing the hyperparameters of the system, such as the size of the overlapping region and the dispatch policy, can significantly shift the queueing dynamics. 
\textcolor{black}{
Our numerical experiments suggest that the hyperlattice queuing models can accurately capture intricate queuing dynamics under different hyperparameter settings. In Figures \ref{fig:numerical_rho} (a) and (b), we observe that the approximation error of both the individual workloads and the fraction of dispatch by the hyperlattice model is less than $0.1$ across all $\lambda/\mu$ ratios. Additionally, in Figure \ref{fig:numerical_rho} (c), we show that the mean response time of the system is also closely approximated by the hyperlattice model.
}

\begin{figure}[!t]
\FIGURE
    {\includegraphics[width=\linewidth]{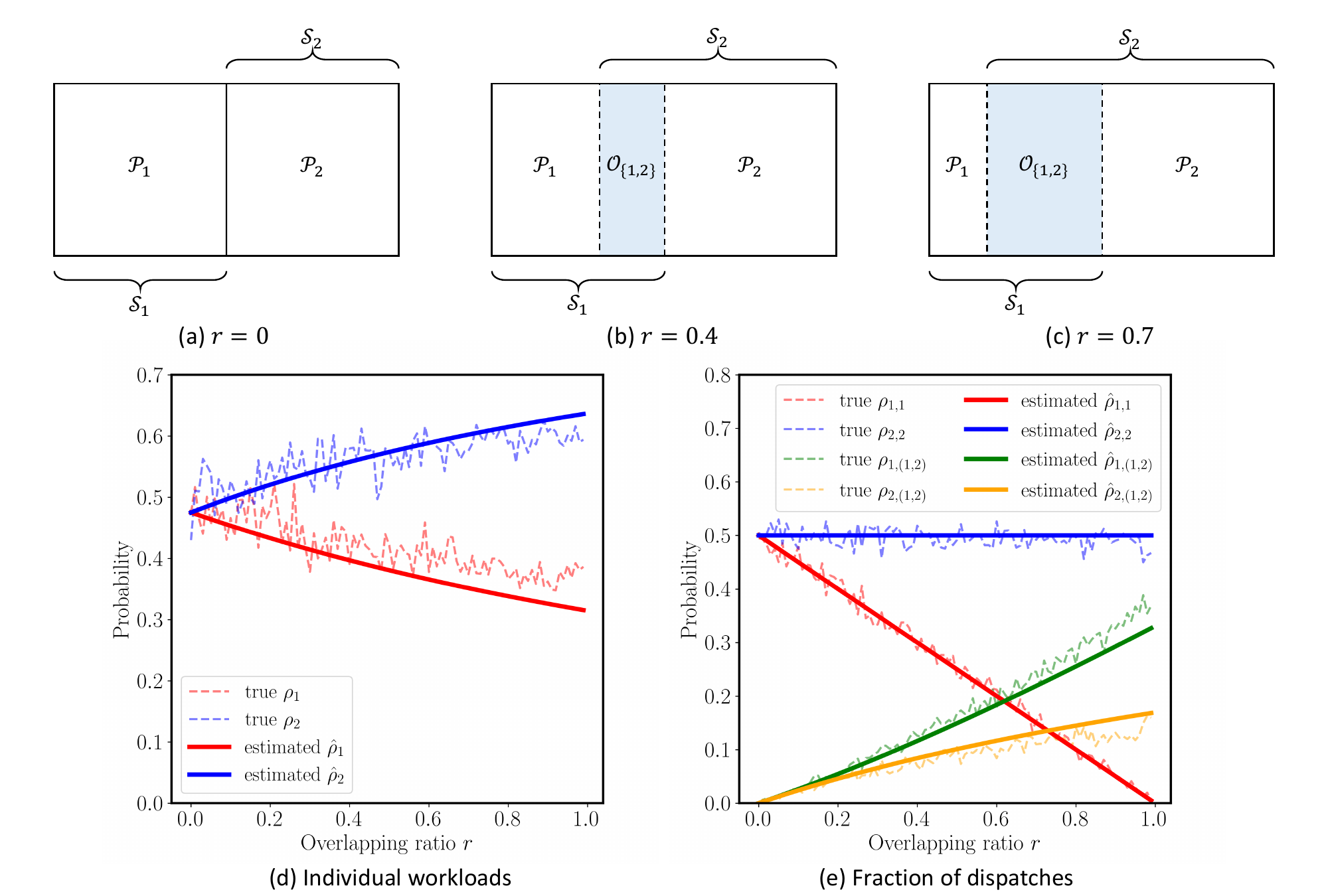}}
    {Synthetic results for two-server systems with varying overlapping ratio $r$. \label{fig:exp-setting-ratio}}
    {In (a), (b), and (c), the shaded area represents the overlapping region of the system. (d) and (e) show individual workloads $\rho_i$ and fraction of dispatches $\rho_{i,(i,j)}$ with varying overlapping ratio $r$. The solid lines represent the estimated performance measures using hyperlattice queueing models, whereas the dashed lines represent the true performance measures obtained from simulation runs. }
\end{figure}

To examine the impact of the overlapping ratio on system dynamics, we first create 100 systems by varying the overlapping ratio $r$. This ratio represents the percentage of the area of the overlapping region with respect to the area of the entire service region of server 1, i.e., $r = |\mathcal{O}_{\{1,2\}}| / (|\mathcal{O}_{\{1,2\}}| + |\mathcal{P}_{1}|)$, as shown in Figure~\ref{fig:exp-setting-ratio} (a-c). We then compare the estimated values of $\rho_i$ and $\rho_{i,e}$ to their true values (indicated by dashed lines) in Figure~\ref{fig:exp-setting-ratio} (d) and (e), and find that the estimated values are a good approximation of their true counterparts.
Clearly, the workloads of the two servers differ more as the overlapping ratio increases, thereby enlarging the area that server 1 has to cover. 
Moreover, as the ratio increases, the probability of dispatching either server to the overlap region grows (indicated by green and orange lines). In contrast, the probability of sending server 2 to its primary area remains the same (blue line), and the chance of dispatching server 1 decreases (red line). This observation arises from our simplified synthetic setup, where we use a uniform dispatch policy and the dispatch probability to a region directly correlates with its size.

\begin{figure}[!t]
\FIGURE
    {\includegraphics[width=\linewidth]{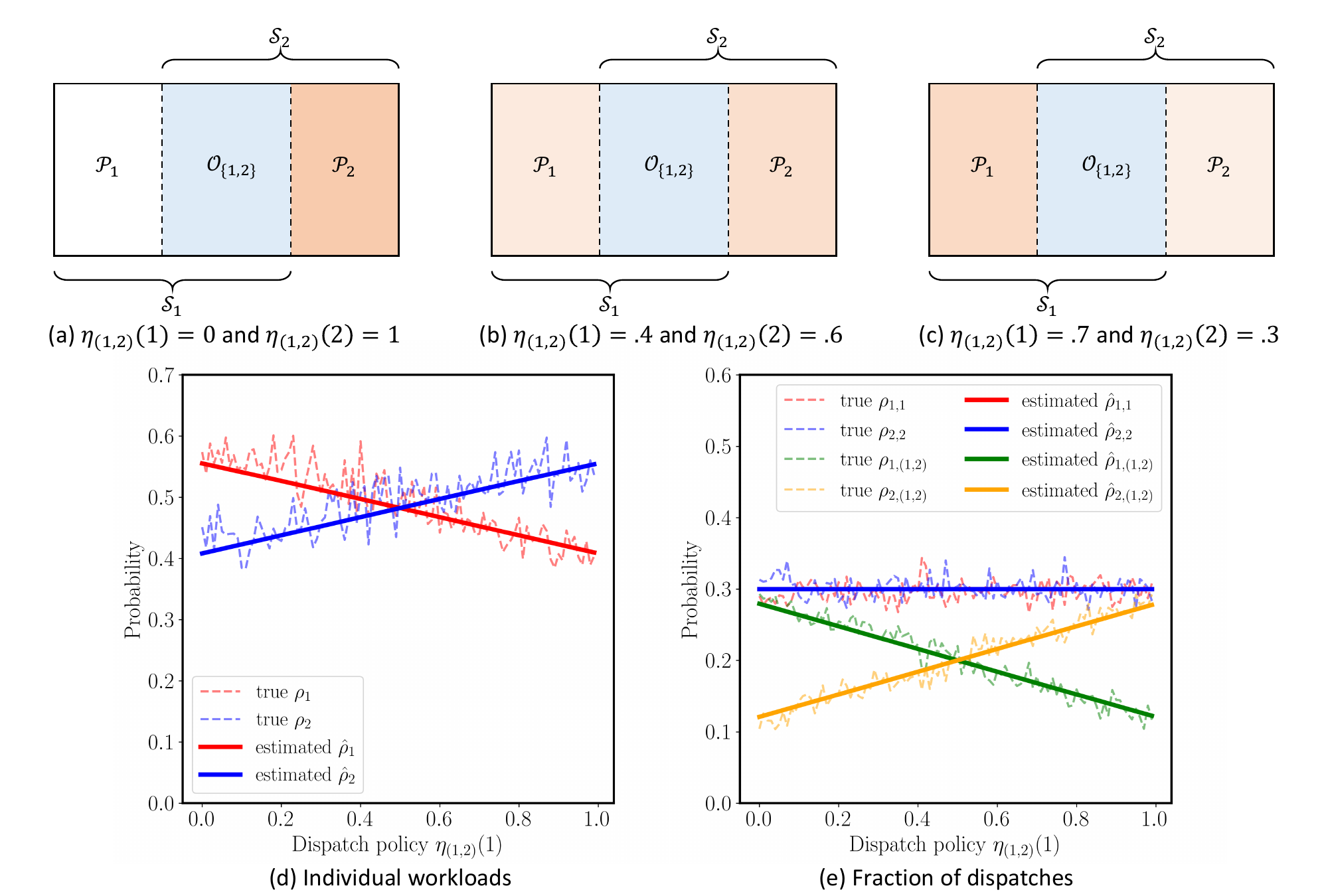}}
    {Synthetic results for two-server systems with varying dispatch policy $\eta_{(1,2)}(1)$ and $\eta_{(1,2)}(2)$. \label{fig:exp-setting-eta}}
    {In (a), (b), and (c), the shaded area represents the overlapping region of the system. (d) and (e) show individual workloads $\rho_i$ and fraction of dispatches $\rho_{i,(i,j)}$ with varying overlapping ratio $r$. The solid lines represent the estimated performance measures using hyperlattice queueing models, whereas the dashed lines represent the true performance measures obtained from simulation runs. }
\end{figure}

Furthermore, we also investigate the impact of dispatch policy by creating another 100 systems with different values of $\eta_{(1,2)}(1)$ and $\eta_{(1,2)}(2)$ (where $\eta_{(1,2)}(1) = 1 - \eta_{(1,2)}(2)$), while keeping the service regions the same, as shown in Figure~\ref{fig:exp-setting-eta} (a-c). 
For simplicity, we here omit the subscript state $u$ in the notation of dispatch policy $\eta_{e, u}(i),~\forall i \in e$ as we assume it depends on the state $u$ only only when prioritizing idle servers and remains independent of $u$ when all servers are busy, as discussed in Section~\ref{sec:dispatch_policy}.
Figure~\ref{fig:exp-setting-eta} (d) and (e) show that the estimated $\rho_i$ and $\rho_{i,e}$ (indicated by solid lines) can closely approximate their true values suggested by the simulation.
As we observe, when the dispatch probability for a call from the overlapping region to either server is 0.5, the workload distribution between the two servers is equal, \ie, $\eta_{1,2}(1) = \eta_{1,2}(2) = 0.5$.

\subsection{Case study: Police Redistricting in Atlanta, GA}\label{sec:case_study}

\begin{figure}[!t]
\FIGURE
    {\includegraphics[width=.8\linewidth]{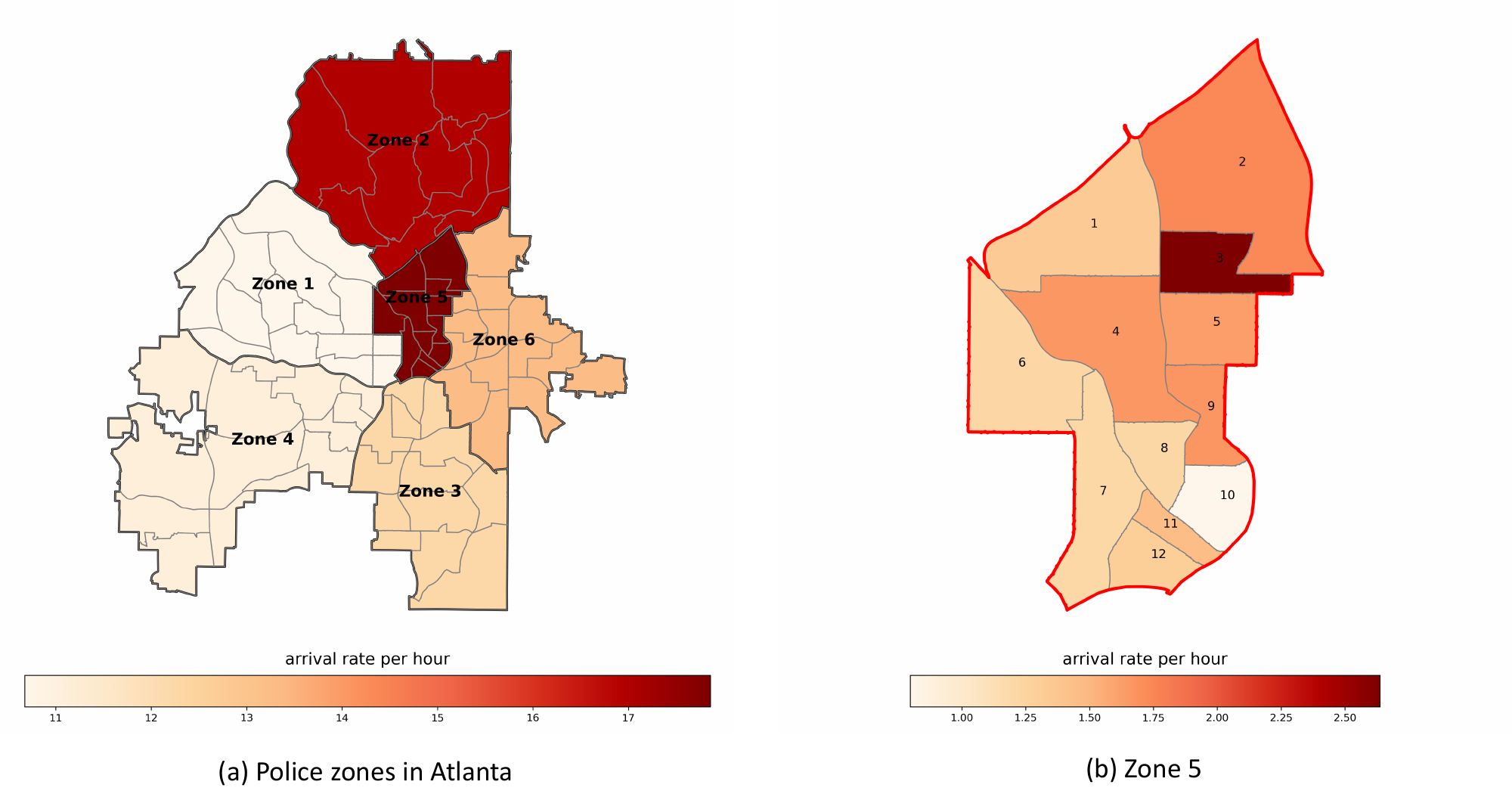}}
    {Police workload in Atlanta, Georgia. \label{fig:districting-map}}
    {(a) presents a comprehensive map delineating the various police zones in Atlanta, Georgia, while (b) specifically focuses on Zone 5. The intensity of color shading directly corresponds to the frequency of police call arrivals, providing a visual representation of call distribution across the zones.}
\end{figure}

In this section, we present a case study focusing on the police redistricting problem within Atlanta, Georgia. Figure \ref{fig:districting-map} illustrates the various police zones of the city, with a particular emphasis on the intricate beats of Zone 5. Ideally, each beat would be served by a dedicated police unit, ensuring a focused response within its boundaries. However, this is often not feasible due to a limited number of available police units, primarily attributed to constraints in officer and resource allocation. In instances where a specific beat lacks a dedicated police unit, units from adjacent beats are required to assist. While such support is permissible among units within the same zone, assistance across zones is explicitly prohibited. The depth of color shading in Figure \ref{fig:districting-map} is a direct representation of the frequency of police call arrivals, quantified by the arrival rate $\lambda$. These rates are estimated based on an analysis of historical 911 calls-for-service data, specifically collected during the years 2021 and 2022, as detailed in the study of \cite{ zhu2022data}. The color indicates that Zone 5 experiences the highest frequency of police calls, averaging approximately 18 calls per hour. As a result, our analysis concentrates on examining potential overlapping districting plans specific to the beats in Zone 5. A key performance metric under consideration is the standard deviation of the individual workloads, as previously discussed in Section \ref{sec:workload}.

\begin{figure}[!t]
\FIGURE
    {\includegraphics[width=\linewidth]{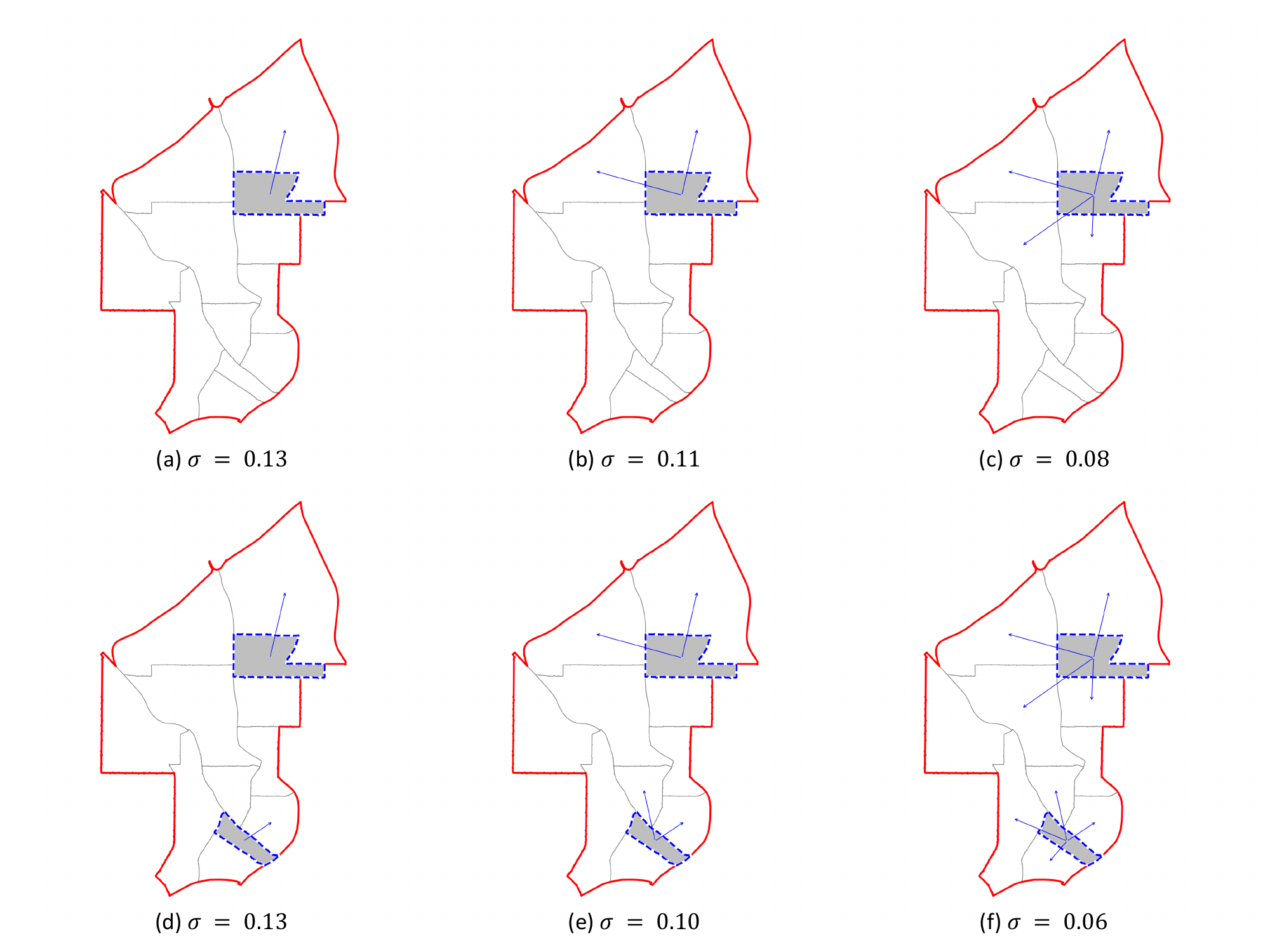}}
    {An illustrative example showing different districting plans for Zone 5 in Atlanta. \label{fig:districting-exp}}
    {Gray lines represent the basic geographical units patrolled by the police, and red lines outline the districting plans. In the map, regions shaded in grey with dashed blue lines represent beats without a dedicated police unit. The presence of blue arrows originating from these beats signifies that their policing needs are addressed by units from adjacent beats.}
\end{figure}

In Figure \ref{fig:districting-exp}, we measure the standard deviation of the individual workloads for different overlapping districting plans using the truncated hyperlattice model in Section~\ref{sec:estimation} with an upper bound state of $K=10$, denoted as $\sigma$. Panels (a), (b), and (c) display a scenario where a single beat lacks a dedicated police unit. Specifically, in panel (a), the districting plan is non-overlapping since the policing requirements of this unassigned beat are exclusively managed by a single adjacent beat. In panels (b) and (c), we display different overlapping districting plans by varying the number of overlapping beats. In these configurations, the standard deviations of workload are notably reduced, measured at $0.11$ and $0.08$ respectively, demonstrating the impact of overlapping districts on workload distribution uniformity. Panels (d), (e), and (f) depict scenarios wherein two beats operate without dedicated police units. Notably, panel (f) demonstrates a further reduction in workload standard deviation compared to panel (c), measured at $0.06$, attributable to an additional beat being serviced by the overlapping police units from its neighbors.

\begin{table}[htbp]
\centering
\caption{\color{black}Workload allocation among police units across beats under various districting plans.}
\setlength{\tabcolsep}{2.3mm}
\begin{tabular}{c|cccccccccccc|c}
\hline
\multirow{2}{*}{Plan} & \multicolumn{12}{c|}{Workload of police units} & \multirow{2}{*}{$\sigma$}\\
\cline{2-13}
  & 1 & 2 & 3 & 4 & 5 & 6 & 7 & 8 & 9 & 10 & 11 & 12 \\
\hline
a & 0.28 & 0.63 & - & 0.19 & 0.27 & 0.19 & 0.19 & 0.26 & 0.27 & 0.23 & 0.13 & 0.21 & 0.13\\
b & 0.51 & 0.46 & - & 0.21 & 0.28 & 0.21 & 0.20 & 0.28 & 0.28 & 0.25 & 0.14 & 0.22 & 0.11  \\
c & 0.41 & 0.35 & - & 0.33 & 0.40 & 0.21 & 0.21 & 0.28 & 0.29 & 0.25 & 0.14 & 0.22 & 0.08  \\
\hline
d & 0.27 & 0.63 & - & 0.19 & 0.26 & 0.19 & 0.19 & 0.26 & 0.26 & 0.36 & - & 0.20 & 0.13  \\
e & 0.51 & 0.46 & - & 0.20 & 0.28 & 0.21 & 0.20 & 0.34 & 0.28 & 0.32 & - & 0.22 & 0.10  \\
f & 0.41 & 0.35 & - & 0.33 & 0.40 & 0.21 & 0.24 & 0.31 & 0.29 & 0.29 & - & 0.26 & 0.06  \\
\hline
g & 0.29 & 0.23 & 0.45 & 0.21 & 0.28 & 0.21 & 0.20 & 0.28 & 0.28 & 0.25 & 0.13 & 0.22 & 0.07 \\
\hline
\end{tabular}
\label{table:workload_table}
\end{table}

In Table \ref{table:workload_table}, we record the workload allocation among police units across beats under different districting plans. Plans (a) -- (f) align with corresponding panels in Figure \ref{fig:districting-exp}. Plan (g) depicts an independent scenario where each beat is serviced by a dedicated police unit with non-overlapping districts. It is noteworthy that the total number of police units varies by districting plan, with plans (a) -- (c) utilizing 11 units, plans (d) -- (f) utilizing 10 units, and plan (g) utilizing all 12 units. We observe that plan (f) exhibits a lower standard deviation in workload compared to plan (g), despite the latter having two additional police units.

\setlength{\tabcolsep}{4pt}
\begin{table}[h!]
\centering
\caption{\textcolor{black}{Mean travel times for police units across beats under different districting plans.}}
\begin{tabular}{c|cccccccccccc|c}
\hline
\multirow{2}{*}{Plan} & \multicolumn{12}{c|}{Mean travel time of police units (minutes)} & \multirow{2}{*}{${\bar\tau}$} \\
\cline{2-13}
  & 1 & 2 & 3 & 4 & 5 & 6 & 7 & 8 & 9 & 10 & 11 & 12 \\
\hline
a & 12.15 & 14.24 & - & 12.26 & 12.07 & 12.20 & 11.81 & 11.94 & 12.01 & 12.09 & 12.16 & 11.82 & 12.54 \\
b & 13.89 & 13.64 & - & 12.26 & 12.07 & 12.20 & 11.81 & 11.94 & 12.01 & 12.09 & 12.16 & 11.82 & 12.59 \\
c & 13.24 & 13.10 & - & 13.75 & 12.62 & 12.20 & 11.81 & 11.94 & 12.01 & 12.09 & 12.16 & 11.82 & 12.62 \\
\hline
d & 12.15 & 14.24 & - & 12.26 & 12.07 & 12.20 & 11.81 & 11.94 & 12.01 & 12.69 & - & 11.82 & 12.61 \\
e & 13.90 & 13.64 & - & 12.26 & 12.07 & 12.20 & 11.81 & 12.41 & 12.01 & 12.44 & - & 11.82 & 12.67 \\
f & 13.24 & 13.10 & - & 13.75 & 12.62 & 12.20 & 12.21 & 12.19 & 12.01 & 12.28 & - & 11.99 & 12.63 \\
\hline
g & 12.15 & 12.03 & 12.21 & 12.26 & 12.07 & 12.20 & 11.81 & 11.94 & 12.01 & 12.09 & 12.16 & 11.82 & 12.07 \\
\hline
\end{tabular}
\label{table:travel_time_table}
\end{table}

{\color{black} In Table \ref{table:travel_time_table}, we present the mean travel times of different police units under various districting plans, as well as the overall mean travel time ${\bar\tau}$. It is observed that enabling overlapping service regions results in increased mean travel times for police units serving neighboring areas. Although plan (f) exhibits a lower standard deviation in workload compared to plan (g) with fewer police units, it increases the mean travel time for neighboring units by more than one minute and the overall mean travel time by about half a minute.}

The case study also provides valuable managerial insights for addressing the police redistricting problem. It shows that deploying a larger number of police units does not necessarily equate to more effective policing. For instance, plan (f) with 10 units had a lower standard deviation of workload compared to plan (g) with 12 units. This suggests that utilizing overlapping police beats, as opposed to a rigid, non-overlapping structure, can significantly enhance the efficiency of resource allocation. 
{\color{black} However, it should be noted that utilizing overlapping police beats may come at the cost of increased mean travel times. Practitioners must carefully balance the trade-off between response time and resource allocation efficiency in their actual implementations. In general, the overlapping districting approach is preferred in high-demand areas, such as Zone 5 in Atlanta or other cities facing similar challenges.}

\section{Discussions}
\label{sec:discussions}

This paper presents a novel generalized hypercube queueing model that captures the detailed queueing dynamics of each server in a system. 
We demonstrate the model's versatility by applying it to analyze overlapping patrols. Our Markov model employs a hyperlattice to represent an extensive state space, where each vertex corresponds to an integer-valued vector. The transition rate matrix is highly sparse, permitting transitions between vectors with an $\ell_1$ distance difference of 1. To enhance computational tractability, we introduce an efficient state truncation technique that connects our model to a general birth-and-death process with state-dependent rates. This simplifies the detailed balancing equations into a significantly smaller linear system of equations, facilitating the efficient calculation of steady-state distributions. These distributions are essential for evaluating general performance metrics, as acknowledged in Larson's seminal work.

We emphasize that, akin to the rationale in Larson's original paper \citep{larson1974hypercube}, no exact analytical expression exists for the hyperlattice model regarding travel time measures for an infinite-line capacity system. This challenge arises from the server's position not being chosen based on the received region probability distribution of the call during system saturation periods when the server is dispatched consecutively. Consequently, in practice, we must assume that queueing delay is the primary factor influencing the server's response time for each call.

Additionally, we highlight that our framework concentrates on a random policy controlled by a set of state-dependent matrices, and it can be extended to more intricate decision-making situations. This generalization would empower us to develop and optimize police dispatching policies that take into account queueing dynamics for more realistic systems. This consideration is particularly relevant since conventional First-Come-First-Served (FCFS) approaches may not be optimally efficient, leaving room for enhancement and optimization. Lastly, although our paper focuses on police districting, the generalized hypercube queueing model can also be applied to model mobile servers in a broader range of applications.

An important consideration emphasized in the literature is the explicit modeling of travel times in queueing systems, as region-dependent service rates and spatial dynamics critically influence queue behavior and dispatch effectiveness (e.g., \cite{kanoria2021dynamic, besbes2022spatial}). In our hyperlattice queueing model, travel times are treated as exogenous — consistent with Larson’s hypercube formulation — to maintain analytical and computational tractability. Nevertheless, incorporating travel-time dynamics directly into the Markov chain would improve accuracy for spatially sensitive dispatch policies. As an intermediate step, our supplementary experiments in Section \ref{sec:compare_hypercube} incorporate heterogeneous service rates that reflect server-specific travel times, thereby validating the robustness of the truncated hyperlattice approximation. Extending the model to endogenize travel-time dynamics remains a promising direction for future research.

\bibliographystyle{informs2014}
\bibliography{refs}

\begin{thebibliography}{28}
\providecommand{\natexlab}[1]{#1}
\providecommand{\url}[1]{\texttt{#1}}
\providecommand{\urlprefix}{URL }

\bibitem[{Ansari et~al.(2017)Ansari, McLay, \protect\BIBand{}
  Mayorga}]{ansari2017maximum}
Ansari S, McLay LA, Mayorga ME (2017) A maximum expected covering problem for
  district design. \emph{Transportation Science} 51(1):376--390.

\bibitem[{Bammi(1975)}]{bammi1975allocation}
Bammi D (1975) Allocation of police beats to patrol units to minimize response
  time to calls for service. \emph{Computers \& Operations Research}
  2(1):1--12.

\bibitem[{Beojone et~al.(2021)Beojone, M{\'a}ximo~de Souza, \protect\BIBand{}
  Iannoni}]{beojone2021efficient}
Beojone CV, M{\'a}ximo~de Souza R, Iannoni AP (2021) An efficient exact
  hypercube model with fully dedicated servers. \emph{Transportation Science}
  55(1):222--237.

\bibitem[{Besbes et~al.(2022)Besbes, Castro, \protect\BIBand{}
  Lobel}]{besbes2022spatial}
Besbes O, Castro F, Lobel I (2022) Spatial capacity planning. \emph{Operations
  Research} 70(2):1271--1291.

\bibitem[{Bretto(2013)}]{bretto2013hypergraph}
Bretto A (2013) Hypergraph theory. \emph{An introduction. Mathematical
  Engineering. Cham: Springer} 1.

\bibitem[{Cruise et~al.(2020)Cruise, Jonckheere, \protect\BIBand{}
  Shneer}]{cruise2020stability}
Cruise J, Jonckheere M, Shneer S (2020) Stability of jsq in queues with general
  server-job class compatibilities. \emph{Queueing Systems} 95:271--279.

\bibitem[{Cummings(2022)}]{Metropol35:online}
Cummings C (2022) Metro police agencies face double-digit staffing shortages.
  https://www.atlantanewsfirst.com/2022/06/15/metro-police-agencies-face-double-digit-staffing-shortages/,
  (Accessed on 04/04/2023).

\bibitem[{de~Souza et~al.(2015)de~Souza, Morabito, Chiyoshi, \protect\BIBand{}
  Iannoni}]{de2015incorporating}
de~Souza RM, Morabito R, Chiyoshi FY, Iannoni AP (2015) Incorporating
  priorities for waiting customers in the hypercube queuing model with
  application to an emergency medical service system in brazil. \emph{European
  journal of operational research} 242(1):274--285.

\bibitem[{Flajolet \protect\BIBand{} Sedgewick(2009)}]{flajolet2009analytic}
Flajolet P, Sedgewick R (2009) \emph{Analytic combinatorics} (cambridge
  University press).

\bibitem[{Ford~Jr \protect\BIBand{} Fulkerson(1956)}]{ford1956maximal}
Ford~Jr LR, Fulkerson DR (1956) Maximal flow through a network. \emph{Canadian
  journal of Mathematics} 8:399--404.

\bibitem[{Hall(1987)}]{hall1987representatives}
Hall P (1987) On representatives of subsets. \emph{Classic Papers in
  Combinatorics}, 58--62 (Springer).

\bibitem[{Jones(1996)}]{jones1996generalized}
Jones CH (1996) Generalized hockey stick identities and $n$-dimensional block
  walking. \emph{Fibonacci Quarterly} 34(3):280--288.

\bibitem[{Kanoria(2021)}]{kanoria2021dynamic}
Kanoria Y (2021) Dynamic spatial matching. \emph{arXiv preprint
  arXiv:2105.07329} .

\bibitem[{Larson(1974)}]{larson1974hypercube}
Larson RC (1974) A hypercube queuing model for facility location and
  redistricting in urban emergency services. \emph{Computers \& Operations
  Research} 1(1):67--95.

\bibitem[{Larson \protect\BIBand{} Odoni(1981)}]{larson1981urban}
Larson RC, Odoni AR (1981) \emph{Urban Operations Research} (Prentice Hall).

\bibitem[{Morabito et~al.(2008)Morabito, Chiyoshi, \protect\BIBand{}
  Galvão}]{Morabito2008}
Morabito R, Chiyoshi F, Galvão RD (2008) Non-homogeneous servers in emergency
  medical systems: Practical applications using the hypercube queueing model.
  \emph{Socio-Economic Planning Sciences} 42(4):255--270,
  \urlprefix\url{http://dx.doi.org/doi=10.1.1.519.2853}.

\bibitem[{Nesterov \protect\BIBand{} Nemirovski(2015)}]{nesterov2015finding}
Nesterov Y, Nemirovski A (2015) Finding the stationary states of markov chains
  by iterative methods. \emph{Applied Mathematics and Computation} 255:58--65.

\bibitem[{Peng et~al.(2020)Peng, Delage, \protect\BIBand{}
  Li}]{peng2020probabilistic}
Peng C, Delage E, Li J (2020) Probabilistic envelope constrained multiperiod
  stochastic emergency medical services location model and decomposition
  scheme. \emph{Transportation science} 54(6):1471--1494.

\bibitem[{Prountzos et~al.(2021)Prountzos, Matzavinos, \protect\BIBand{}
  Lampiris}]{simpy}
Prountzos S, Matzavinos A, Lampiris E (2021) Simpy: Simulation in python.
  \url{https://simpy.readthedocs.io/}.

\bibitem[{Romik(2000)}]{romik2000stirling}
Romik D (2000) Stirling's approximation for $n!$: the ultimate short proof?
  \emph{The American Mathematical Monthly} 107(6):556.

\bibitem[{Takeda et~al.(2007)Takeda, Widmer, \protect\BIBand{}
  Morabito}]{takeda2007analysis}
Takeda RA, Widmer JA, Morabito R (2007) Analysis of ambulance decentralization
  in an urban emergency medical service using the hypercube queueing model.
  \emph{Computers \& Operations Research} 34(3):727--741.

\bibitem[{Tsitsiklis \protect\BIBand{} Xu(2017)}]{tsitsiklis2017flexible}
Tsitsiklis JN, Xu K (2017) Flexible queueing architectures. \emph{Operations
  Research} 65(5):1398--1413.

\bibitem[{Van~Doorn(2012)}]{van2012stochastic}
Van~Doorn E (2012) \emph{Stochastic monotonicity and queueing applications of
  birth-death processes}, volume~4 (Springer Science \& Business Media).

\bibitem[{Weng et~al.(2020)Weng, Zhou, \protect\BIBand{}
  Srikant}]{weng2020optimal}
Weng W, Zhou X, Srikant R (2020) Optimal load balancing with locality
  constraints. \emph{Proceedings of the ACM on Measurement and Analysis of
  Computing Systems} 4(3):1--37.

\bibitem[{Writer(2023)}]{AtlantaA89:online}
Writer S (2023) Atlanta-area agency facing critical officer shortage.
  \url{https://www.policemag.com/command/news/15307675/atlanta-area-agency-facing-critical-officer-shortage},
  (Accessed on 04/04/2023).

\bibitem[{Yoon et~al.(2021)Yoon, Albert, \protect\BIBand{}
  White}]{yoon2021stochastic}
Yoon S, Albert LA, White VM (2021) A stochastic programming approach for
  locating and dispatching two types of ambulances. \emph{Transportation
  Science} 55(2):275--296.

\bibitem[{Zhu et~al.(2020)Zhu, Bukharin, Lu, Wang, \protect\BIBand{}
  Xie}]{zhu2020data}
Zhu S, Bukharin AW, Lu L, Wang H, Xie Y (2020) Data-driven optimization for
  police beat design in south fulton, georgia. \emph{arXiv preprint
  arXiv:2004.09660} .

\bibitem[{Zhu et~al.(2022)Zhu, Wang, \protect\BIBand{} Xie}]{zhu2022data}
Zhu S, Wang H, Xie Y (2022) Data-driven optimization for atlanta police-zone
  design. \emph{INFORMS Journal on Applied Analytics} 52(5):412--432.

\end{thebibliography}

\clearpage
\setcounter{page}{1}
\setcounter{section}{0}
\setcounter{figure}{0}
\setcounter{table}{0}
\setcounter{equation}{0}

\renewcommand{\thepage}{EC\arabic{page}}     
\renewcommand{\thesection}{EC\arabic{section}} 
\renewcommand{\thefigure}{EC\arabic{figure}}  
\renewcommand{\thetable}{EC\arabic{table}} 
\renewcommand{\theequation}{EC\arabic{equation}}

\begin{center}
{\Large\bfseries E-Companion for 
``Generalized Hypercube Queueing Models with Overlapping Service Regions''}\\[1em]
\end{center}

\section{Notation table}
\begin{table}[!h]
\caption{Summary of key notations}
\label{tab:notation}
\resizebox{\textwidth}{!}{%
\begin{tabular}{lll}
\midrule[0.3pt]
\bf Section & \bf Notation & \bf Description \\ \hline
\ref{sec:setup}
& $I$ & Total number of servers. \\
& $\mathcal{I} = \{i = 1, \dots, I\}$ & Set of indices of servers as well as their primary service regions.\\
& $\mathcal{E} = \{e \subseteq \mathcal{I}\}$ & Set of server subsets defining overlapping service regions.\\
& $\mathcal{S}_i$, $\mathcal{S}$ & The service region patrolled by server~$i$, and the entire geographical space.\\
& $\mathcal{P}_i$ & Primary service region patrolled by server~$i$.\\
& $\mathcal{O}_e$ & Overlapping service region patrolled by server set~$e$.\\
& $\lambda_{i}$, $\lambda_{e}$, $\lambda$ & Arrival rates of primary region~$\mathcal{P}_i$, overlapping region~$\mathcal{O}_e$, and the entire region.\\
& $\mu_i$, $\mu$ & Service rate of server~$i$ and the total service rate of all servers.\\
\hline
\ref{sec:hyperlattice} 
& $n_i \in \mathbb{Z}_+$ & Status of server $i$. Server $i$ is idle if $n_i = 0$ and busy if $n_i > 0$. \\
& $\eta_{e,u}(i) \in [0, 1]$ & Probability of choosing server $i$ in overlapping region $e$ at state $u$. \\
& $B = (n_i)_{i\in\mathcal{I}}$ & A state (node) in the hyperlattice.\\
& $\mathcal{U} = \{u = 1, 2, \dots\}$ & Set of state indices ordered by the tour algorithm. \\
& $Q = (q_{uv})_{u,v\in\mathcal{U}}$ & Transition rate matrix for the hyperlattice queue.\\ 
\hline
\ref{sec:estimation} 
& $U_K$ & Number of states where the total number of calls in the system is less than $K$.\\
& $\mathcal{U}_K = \{u = 1, \dots, U_K\}$ & Set of states for the truncated hyperlattice queue. \\
& $Q_K = (q_{uv})_{u,v\in\mathcal{U}_K}$ & Transition rate matrix for the truncated hyperlattice queue.\\
\hline
\ref{sec:performance-measure} 
& $\rho_i$ & Fraction of time that server $i$ is busy serving calls. \\
& $\rho_{i,e} / \rho_{i,i}$ & Fraction of dispatches that send server $i$ to each sub-region $e$ (or $i$). \\
& $\bar \tau$ & Unconditional mean travel time. \\
& $\tau_i / \tau_e$ & Mean travel time for server $i$ or region $e$. \\
\midrule[0.3pt]
\end{tabular}%
}
\end{table}

\section{Comparison with hypercube model}\label{sec:compare_hypercube}

We selected the original hypercube model \citep{larson1974hypercube} as our primary baseline because it is a foundational benchmark in spatial queueing systems and has been widely used to model emergency services under non-overlapping districting. Unlike more specialized models that incorporate application-specific complexities, the hypercube model provides a clean and interpretable point of comparison, allowing us to isolate the performance gains of our proposed hyperlattice framework, which extends the hypercube by incorporating server-level queues and overlapping service regions.
To enable a meaningful comparison, we construct a synthetic system subjected to traditional non-overlapping districting, defined by a $2 \times 2$ grid encompassing four identical sub-regions, each served by its respective server.
For a meaningful comparison with the hypercube queueing model, where servers can attend to calls from any region if other servers are unavailable, we introduce an overlapping service zone comprising all four sub-regions. This design ensures no primary service area is designated, granting every server the flexibility to address calls from any sub-region, i.e., $\mathcal{E} = \{\mathcal{I}\}$.
Furthermore, we operate under the assumption that the call distributions are even across these sub-regions. The simulation dispatch policy is assumed to be $\eta_{\mathcal{I}} = \left( \frac{1}{2}, \frac{1}{12}, \frac{1}{4}, \frac{1}{6} \right)$ across all states, with a preference for allocating idle servers if available. This results in a call arrival rate of $\lambda/4$  = 1 for each.
All servers in this model exhibit consistent performance, with a uniform service rate denoted as $\mu$ = 1.
Lastly, we adopt a standardized dispatch policy that is uniformly applied across all servers. 

\begin{figure}[!t]
\FIGURE
    {\includegraphics[width=.8\linewidth]{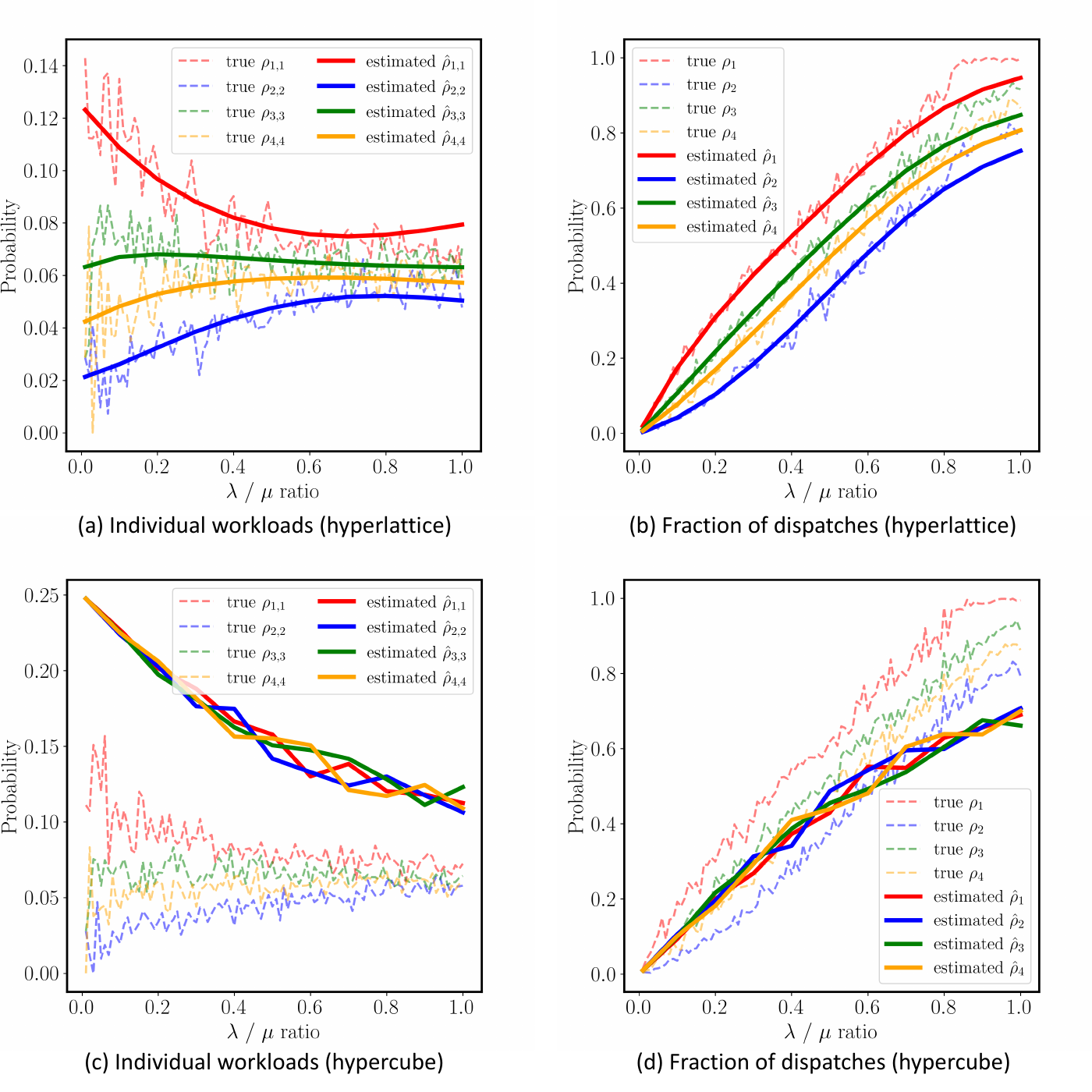}}
    { Synthetic results for four-server systems with varying $\lambda/\mu$ ratio. \label{fig:exp-setting-eta-alt}}
    { (a) individual workloads $\rho_i$ and (b) fraction of dispatches to its own region $\rho_{i,i}$ with varying $\lambda/\mu$ ratio. The solid lines represent the estimated performance measures using hyperlattice queueing models, whereas the dashed lines represent the true performance measures obtained from simulation runs. (c) and (d) present the corresponding results for Larson's hypercube queueing model.}
\end{figure}

Figures~\ref{fig:exp-setting-eta-alt} (a) and (b) show individual workloads $\rho_i$ and the fraction of dispatches to their own regions $\rho_{i,i}$ across different $\lambda/\mu$ ratios for the hyperlattice queueing model, while Figures~\ref{fig:exp-setting-eta-alt} (c) and (d) present similar results for the hypercube queueing model. We observe that the hyperlattice model closely approximates the trend of the simulation results for varying $\lambda/\mu$ ratios. In contrast, the hypercube model fails to provide a reasonable estimation of the simulation results.

\begin{figure}[!t]
\FIGURE
    {\includegraphics[width=\linewidth]{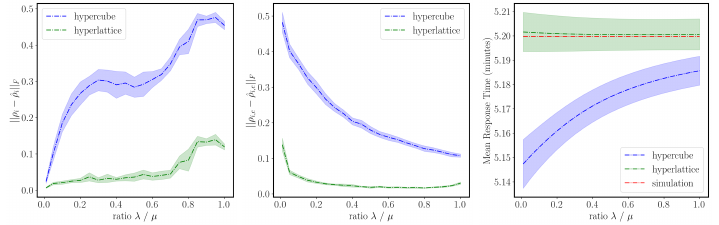}}
    { Performance metrics from queueing models and simulations with varying $\lambda / \mu$ ratios. \label{fig:numerical_rho} }
    { (a) the Frobenius norm of the difference between individual workloads $\rho_i$ and (b) the fraction of dispatches $\rho_{i,e}$ derived from the hypercube/hyperlattice queueing models and simulation results (denoted as $\hat{\rho}_{i}$ and $\hat{\rho}_{i,e}$) for systems with varying $\lambda / \mu$ ratios. (c) displays the estimated mean response time from the hypercube/hyperlattice queueing models for systems with varying $\lambda / \mu$ ratios. The dashed lines and the color regions represent the mean and the 95\% confidence interval of the norm obtained from 10 simulation runs. }
\end{figure}

Figures~\ref{fig:numerical_rho} (a) and (b) demonstrate the discrepancies in the estimation of individual workloads and dispatch fractions between the hypercube and hyperlattice models. 
In Figure~\ref{fig:numerical_rho} (a), it is evident that the hypercube model's effectiveness diminishes substantially as the ratio escalates, whereas our method sustains commendable performance even at higher ratios. This advantage is primarily attributed to the hyperlattice's capability to account for more detailed dynamics between queues, a feature not present in the hypercube model.
Figure~\ref{fig:numerical_rho} (b) reinforces the superior predictive accuracy of the hyperlattice model in estimating the fraction of dispatches, regardless of the $\lambda/\mu$ ratio. 
The success is attributed to the harmonized policy adherence between the simulation and that implemented by the hyperlattice model. 
Furthermore, this dispatch policy demonstrates robustness against fluctuations in the $\lambda/\mu$ ratio.
Conversely, the hypercube model, which assumes a shared queue among all servers, fails to capture high-workload dynamics due to its simplified representation of the service process.
{\color{black} In Figure \ref{fig:numerical_rho} (c), we further compare the mean response time between the hypercube model, the hyperlattice model, and the simulation results. Our findings indicate that the hyperlattice model consistently provides a close approximation of the mean response time across various $\lambda/\mu$ ratios. In contrast, the hypercube model offers a more accurate approximation when the system is heavily loaded, but performs poorly when the system is nearly empty.}
Moreover, the hypercube model is constrained to a myopic policy (usually only dependent on travel distance), adopting a fixed preference for dispatch that becomes noticeably inadequate at lower $\lambda/\mu$ ratios, leading to a pronounced divergence from simulation results. However, an increase in the ratio results in a shift in the hypercube model towards a homogeneous policy, akin to the one employed in our simulation, consequently reducing the gap in estimation accuracy. 
Such a shift highlights the limitations of the hypercube model, underscoring its restricted adaptability to various scenarios.

 \begin{figure}[!htbp]
\FIGURE
    {\includegraphics[width=0.9\linewidth]{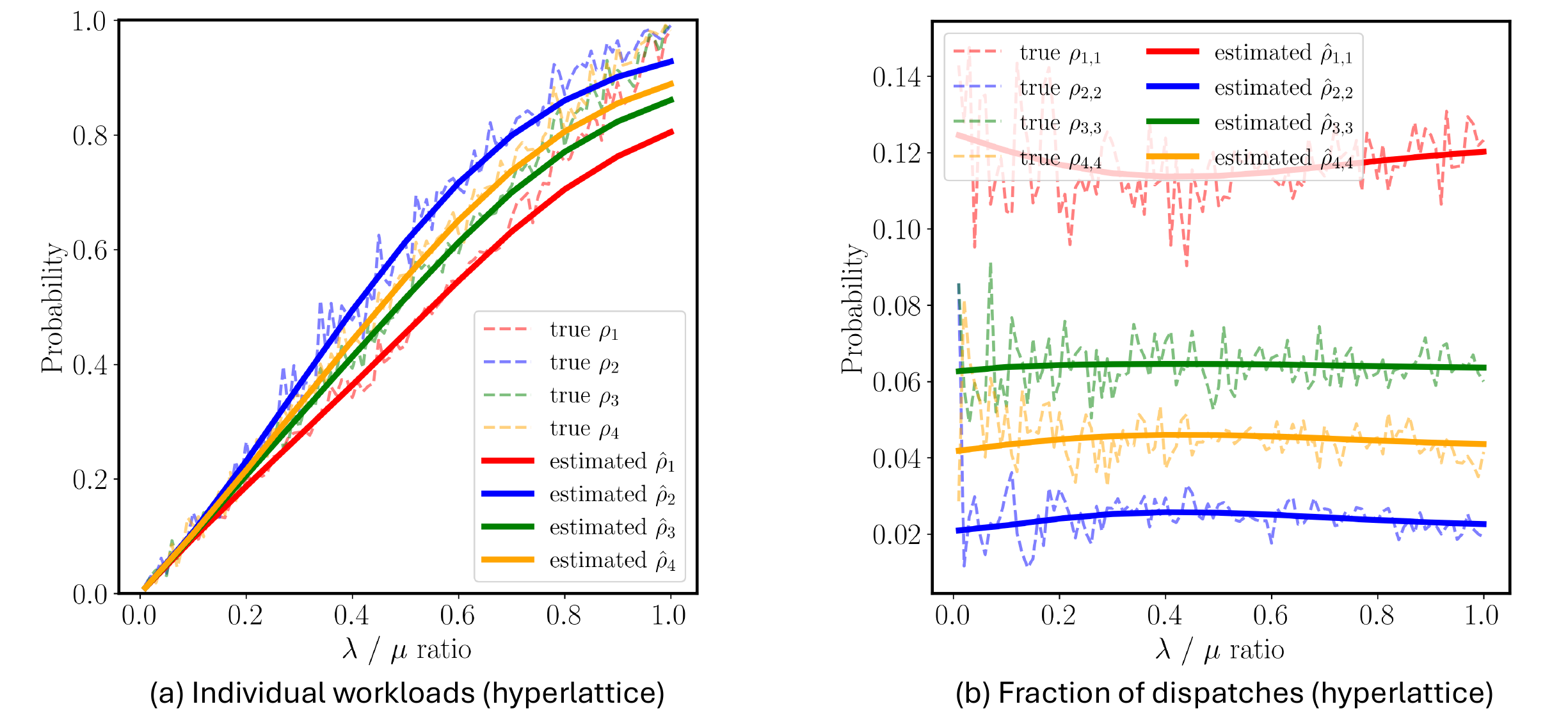}}
    {Performance metrics from queueing models and simulations with heterogeneous service rates. \label{fig:heter_service_rate} }
    {
    (a) The individual workloads $\rho_i$ and (b) the fraction of dispatches to each server's own region $\rho_{i,i}$, respectively, with varying $\lambda / \mu$ ratios. The solid lines represent estimates from the hyperlattice queueing model, while the dashed lines represent simulation results.
    }
\end{figure}
In Figure~\ref{fig:heter_service_rate}, we present the performance metrics of our hyperlattice model, analogous to those in Figure~\ref{fig:exp-setting-eta-alt}, but under heterogeneous service rates. 
Recall that in our synthetic experiment, the overlapping region covers the entire service area to facilitate comparison with the hypercube model.
Therefore, we incorporate server-specific travel times to the overlapping region, resulting in different service rates across servers. The results show that the truncated hyperlattice model continues to closely approximate the simulation outcomes, demonstrating its robustness under heterogeneous service conditions.

 \begin{figure}[!htbp]
\FIGURE
    {\includegraphics[width=0.9\linewidth]{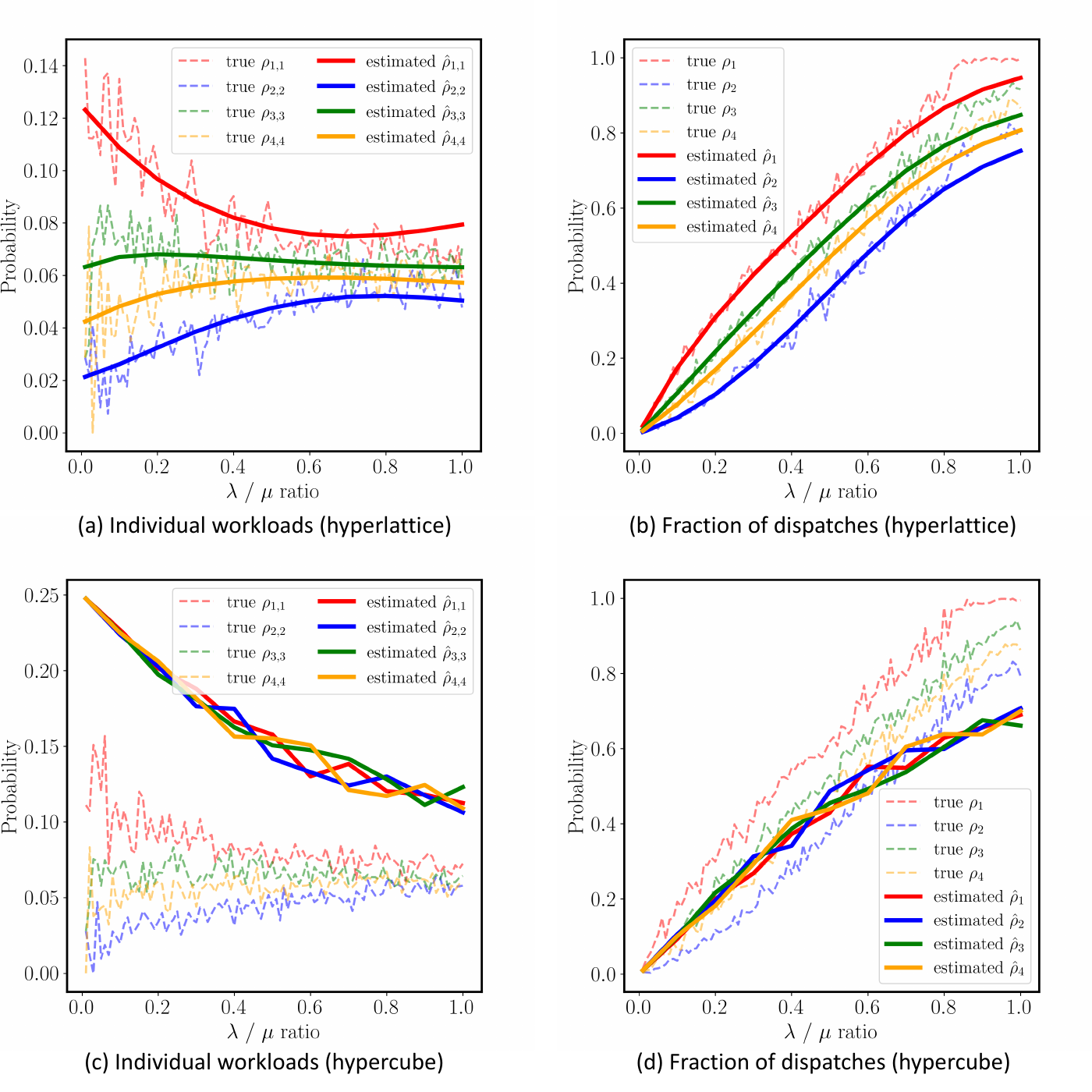}}
    {Performance metrics under server-specific priority dispatch policy.\label{fig:priority} }
    {
    (a) individual workloads $\rho_i$ and (b) fraction of dispatches to its own region $\rho_{i,i}$ with varying $\lambda/\mu$ ratio. The solid lines represent the estimated performance measures using hyperlattice queueing models, whereas the dashed lines represent the proper performance measures obtained from simulation runs. (c) and (d) present the corresponding results for Larson's hypercube queueing model \citep{larson1974hypercube}.
    }
\end{figure}

In Figure~\ref{fig:priority}, we compare the performance of our hyperlattice model with Larson's hypercube model under a server-specific priority dispatch policy, where a predefined preference list is assigned across servers. Specifically, we use the same experimental setup as in Figure~\ref{fig:exp-setting-eta-alt}, but modify the dispatch policy to follow a fixed preference order of $[1, 3, 4, 2]$ across the entire overlapping service region, e.g. server 1 has the highest priority, server 3 the second, server 4 the third, and server two the lowest. The plot demonstrates that (1) our hyperlattice model is capable of handling priority-based dispatch policies, and (2) under such policies, it accurately approximates the performance metrics of interest.

\end{document}